\definecolor{darkblue}{rgb}{.2, 0.2,.8}
\definecolor{darkgreen}{rgb}{0,0.5,0.3}
\definecolor{darkred}{rgb}{.8, .1,.1}
\newcommand{\red}{\color{black}}
\newcommand{\ii}{\mbox{i}}
\newcommand{\dd}{\mathrm{d}}
\newcommand{\bfS}{\mat{S}}
\newcommand{\bfs}{\vect{s}}
\newcommand{\bfe}{\vect{e}}
\newcommand{\E}{\mathbb{E}}
\renewcommand{\P }{{\mathbb P}}
\newcommand{\vect}[1]{\pmb{#1}}
\newcommand{\mat}[1]{\boldsymbol{\bm #1}}
\newcommand{\JJ}[1]{\mathcal{J}^{(#1)}}
\newcommand{\Pg}{\P_{\mathcal{G}_n}}
\def\EMAIL#1{\href{mailto:#1}{#1}}
\begin{document}


\RUNAUTHOR{Bladt and Peralta}

\RUNTITLE{Strong approximations to time-inhomogeneous MJPs}

\TITLE{Strongly convergent homogeneous approximations to inhomogeneous Markov jump processes and applications}

\ARTICLEAUTHORS{%
\AUTHOR{Martin Bladt}
\AFF{Faculty of Mathematical Sciences\\
University of Copenhagen\\
 \EMAIL{martinbladt@math.ku.dk}}
\AUTHOR{Oscar Peralta}
\AFF{School of Operations Research and Information Engineering\\Cornell University\\ \EMAIL{op65@cornell.edu}}
} 

\ABSTRACT{%
The study of time-inhomogeneous Markov jump processes is a traditional topic within probability theory that has recently attracted substantial attention in various applications. However, their flexibility also incurs a substantial mathematical burden which is usually circumvented by using well-known generic distributional approximations or simulations. This article provides a novel approximation method that tailors the dynamics of a time-homogeneous Markov jump process to meet those of its time-inhomogeneous counterpart on an increasingly fine Poisson grid. {   Strong convergence of the processes in terms of the Skorokhod $J_1$ metric is established, and convergence rates are provided. Under traditional regularity assumptions, distributional convergence is established for unconditional proxies, to the same limit.} Special attention is devoted to the case where the target process has one absorbing state and the remaining ones transient, for which the absorption times also converge. Some applications are outlined, such as  univariate hazard-rate density estimation, ruin probabilities, and multivariate phase-type density evaluation.
}%



\maketitle

%


By Markov jump process (MJP), here we mean a stochastic process in $\mathbb{R}_+:=[0,\infty)$ evolving on a discrete state-space for which the future and past are independent, given the present state. When the value of the present time is immaterial, we speak of a \textit{time-homogeneous} MJP, and otherwise, it is customary to refer to it as a \textit{time-inhomogeneous} MJP. While the theory of time-inhomogeneous MJPs may be considered a standard topic in the field of probability, a large portion of the applied probability literature focuses in the time-homogeneous subclass only (see e.g. \cite{asmussen2003applied} and references therein). This is due, in part, to the simplicity and tractability of the latter. For instance, a time-homogeneous MJP has exponentially distributed sojourn times, while the time-inhomogeneous MJP has sojourn times which generally cannot be described as easily. Moreover, the transitions probabilities of time-homogeneous MJPs are characterized by matrix-exponential functions, whereas the ones associated with time-inhomogeneous MJPs are in general defined through product integrals, the latter being considerably more sophisticated than its matrix-exponential counterpart (see e.g. \cite{Dollard1984ProductIW}). Nevertheless, in recent years, time-inhomogeneous MJPs have gathered considerable attention in the stochastic modelling community due to their ability to describe discrete systems whose switching rates vary with time (\cite{telek2004analysis,cloth2007computing,frenkel2009non,bladt2020matrix}). We believe that, to further enhance the use of time-inhomogeneous MJPs in applied contexts, it is vital to establish robust means to analyze the paths and descriptors of time-inhomogeneous MJPs employing simple techniques akin to those available for the time-homogeneous case.

On the other hand, the use of strongly convergent approximations in probability has proven to be a fruitful technique which elegantly yields powerful results. For example, consider Donsker's invariance principle, which states that (under certain conditions) the \emph{law} of a properly scaled random walk is guaranteed to converge to the \emph{law} of a standard Brownian motion. Instead of focusing on a purely weakly convergent result, Strassen built, in \cite{strassen1964invariance}, a sequence of strongly convergent scaled random walks which yield improvements of the law of iterated logarithm that were otherwise not available. Important modern probability topics such as rough path analysis \cite{friz2020course} are rooted in the pioneering work of Wong and Zakai \cite{wong1965relation}, based on studying solutions of stochastic differential equations via their strongly convergent approximations. In applied settings, pathwise constructions have also been successfully used, for instance, to study Markov jump processes that converge to solutions of an ODE \cite{kurtz1971limit}, to establish the convergence of empirical and quantile processes in statistics \cite{csorgo2014strong}, or to analyze the stability of multiclass queueing networks via their strong fluid or diffusive limits \cite{mandelbaum1995strong,mandelbaum1998strong,choudhury1997fluid}. Particularly, in this latter topic, the monograph \cite{whitt2002stochastic} provides a complete and excellent analysis of queueing models and their heavy-traffic limits employing strongly convergent arguments. 


The main goal of this paper is to establish a pathwise \emph{{  conditional} time-homogeneous} approximation scheme to a general finite-state \emph{time-inhomogeneous} MJP $\mathcal{J}$. Our construction has two main components. The first one is a novel modification of the classic uniformization method (\cite{jensen1953markoff,grassmann1977transient,van1992uniformization}), the latter being a simulation method for MJPs which consists in placing their jump times at the arrival epochs of a Poisson process, with the jump mechanism being governed by a discrete-time Markov chain; in the time-inhomogenous case the transitions are dependent on the times at which jumps occur. The key idea behind our scheme consists in placing the path of such discrete-time Markov chain in an \emph{independent} Poisson process of identical intensity: this new process is now a {  conditional} time-homogeneous Markov jump process with infinite state-space, but its jump times no longer match the original ones (thus, can only be considered an approximation). The second component of our construction relies on letting the intensity of the Poisson process associated with the uniformization go to infinity and to establish \emph{strong convergence} of the approximations. The latter is done by measuring the discrepancies between the path of the original time-inhomogeneous MJP and its time-homogeneous approximation in the $J_1$ or Skorokhod sense (see \cite{skorokhod1956limit}). Constructing a strong approximation scheme to time-inhomogeneous MJPs is not only interesting from a theoretical perspective, but it also allows us to infer properties of the \emph{law} of $\mathcal{J}$ by simply appealing to the rule ``strong convergence implies weak convergence''. Thus, one of the byproducts of our strongly convergent result is establishing an alternative way to approximate {   \emph{any} distributional characteristic of $\mathcal{J}$. In particular, we provide a novel approximation scheme to the transition probabilities for time-inhomogeneous MJPs, with some connections to the uniformization methods proposed in \cite{helton1976numerical,van1992uniformization,rindos1995exact,van1998numerical,arns2010numerical}.}

Additionally to the aforementioned theoretical results, our strongly convergent construction sets the stage for a transparent analysis between \emph{functionals} of the process and their respective approximations, and its versatility is one of its strengths. To showcase our results, we study the family of time-inhomogeneous phase-type distributions (\cite{albrecher2019inhomogeneous}), corresponding to the law of the termination time of a time-inhomogeneous MJP. Our convergence results readily translate into providing approximations to time-inhomogeneous phase-type distributions via an infinite-dimensional version of the classic phase-type distribution (\cite{neuts1975probability}), which is much easier to deal with. Indeed, although the former distributions have caught plenty of attention in recent years (starting with \cite{albrecher2019inhomogeneous} and subsequent covariate-dependent and multivariate extensions), it is not surprising that the toolkit available in the literature for phase-type distributions is more robust than its time-inhomogeneous counterpart (see e.g. \cite{bladt2017matrix}) since the product integral is only really computationally inexpensive in the homogeneous case which corresponds to the matrix-exponential. Our approximation yields a way to exploit such a toolkit, with confidence that the approximation converges, and without the need to use stochastic approximation methods such as Markov Chain Monte Carlo (\cite{bladt2003estimation}) or Expectation-Maximization (\cite{asmussen1996fitting}) algorithms. The illustrations we have chosen to provide in this paper are in the domains of univariate data estimation, ruin probabilities, and multivariate distributions.

{\red An interesting related class of distributional approximations for time-homogeneous processes, based on rescaling, is that of uniform acceleration \cite{massey1985asymptotic}. In the context of time-inhomogeneous Markov jump processes, the authors in \cite{massey1998uniform} exploit a high-intensity uniformization technique to provide distributional approximations of the process in terms of its time-varying steady-state properties. Efforts to extend the notion of uniform approximation to a strong pathwise sense have been undertaken in, e.g., \cite{mandelbaum1995strong,mandelbaum1998strong,hampshire2006fluid}. However, these have mainly focused on queueing systems and their functional scaling limits (fluid or diffusive), rather than on time-inhomogeneous Markov jump processes. Further, there are key technical differences that make its analysis considerably different from our current framework (see Remark \ref{rem:ua1} for details). Nonetheless, our work could potentially serve as a stepping stone towards a strong version of the uniform acceleration technique for time-inhomogeneous Markov jump processes. }

This article is structured as follows. In Section \ref{sec:preliminaries} we lay out the components needed in the theory of time-inhomogeneous MJP, as well as on strong convergence of c\`adl\`ag paths. With these tools in hand, in Section \ref{sec:strongMJP} we employ novel techniques to construct a sequence of strongly convergent time-homogeneous {  conditional approximations to any arbitrary time-inhomogeneous MJP, and establish weakly convergent proxies for dealing with the unconditional distributions.} Using the aforementioned results, in Section \ref{sec:IPH} we provide examples where our scheme produces approximations to descriptors of interest in the phase-type literature: univariate hazard-rate density estimation, ruin probability approximations, and multivariate phase-type density evaluation. Finally, in Section \ref{sec:extensions} we conclude with a general discussion on extensions to other jump process.

\section{Preliminaries}\label{sec:preliminaries}
For $p\ge 1$, consider a function $\bm{\Lambda}(t)=\{\Lambda_{ij}(t)\}_{i,j}$, $0\le t <\infty$, which takes values in the space of $p\times p$ real matrices and is such that 
\begin{itemize}
  \item $\bm{\Lambda}(\cdot)$ is Borel measurable and integrable over all compact intervals,
  \item $\Lambda_{ii}(t)\le 0$ and $\Lambda_{ij}(t)\ge 0$ for all $t\ge 0$, $i\neq j\in\{1,\dots,p\}$,
  \item $\sum_j\Lambda_{ij}(t)\le 0$ for all $t\ge 0$, $i\in\{1,\dots,p\}$;
\end{itemize}
we refer to $\bm{\Lambda}(\cdot)$ as an \emph{intensity matrix function}. Denote the \emph{product integral} w.r.t. $\bm{\Lambda}(\cdot)$ over the interval $(s,t)$ by
\begin{equation}\label{eq:prodint1}\prod_s^t (\bm{I} + \bm{\Lambda}(u)\dd u) := \boldsymbol{I}+\sum_{k=1}^{\infty} \int_{s}^{t} \int_{s}^{u_{k}} \cdots \int_{s}^{u_{2}} \mathbf{\Lambda}\left(u_{1}\right) \cdots \mathbf{\Lambda}\left(u_{k}\right) \dd u_{1} \cdots \dd u_{k}.\end{equation}
Under such conditions, it can be shown (see e.g. \cite[Section 4.3]{gill1987product}) that the function
$\bm{P}(s,t)$, $0\le s,t <\infty$, defined by
\[\bm{P}(s,t)=\prod_s^t (\bm{I} + \bm{\Lambda}(u)\dd u)\]
yields a (possibly) defective probability semigroup, in the sense that 
\begin{itemize}
  \item $\bm{P}(s,t)$ is a substochastic matrix,
  \item $\bm{P}(s,t)=\bm{P}(s,u)\bm{P}(u,t)$ for all $0\le s\le u\le t <\infty$,
  \item $\bm{P}(s,s)=\bm{I}$ and $\lim_{t\downarrow s}\bm{P}(s,t)=\bm{I}$.
\end{itemize}
Now, defined on a probability space $(\Omega,\mathbb{P},\mathcal{F})$, let $\mathcal{J}=\{J(t)\}_{t\ge 0}$ be a stochastic process  taking values in $\mathcal{E}=\{1,\dots, p\}$. Assume that each realization of $\mathcal{J}$ is an element of $\mathcal{D}(\mathbb{R}_+, \mathcal{E})$, the space of c\`adl\`ag functions mapping $\mathbb{R}_+$ to $\mathcal{E}$, where $\mathcal{E}$ is endowed with the discrete metric. Furthermore, assume that for all $ 0\le s\le t <\infty,\; i,j\in\mathcal{E}$,
\begin{align}\label{eq:transitionprob1}
\mathbb{P}(J(t)=j\,|\,J(s)=i, \{J(u)\}_{u=0}^s)&=\mathbb{P}(J(t)=j\,|\,J(s)=i)= \left[\bm{P}(s,t)\right]_{ij}.
\end{align}
Then we say that $\mathcal{J}$ is a \emph{time-inhomogeneous Markov jump process} driven by the intensity matrix function $\bm{\Lambda}(\cdot)$. The reason why $\bm{\Lambda}(\cdot)$ is called an ``intensity matrix function'' stems from the fact that if $\Lambda_{ij}(\cdot)$ is right-continuous at $s\ge 0$ for $i,j\in\mathcal{E}$, then
\[\mathbb{P}(J(s+h)=j\,|\,J(s)=i)=\delta_{ij} + \Lambda_{ij}(s)h + o(h),\]
where $\delta_{ij}$ denotes the Kronecker delta function and $o(h)$ an arbitrary function $g:\mathbb{R}_+\rightarrow\mathbb{R}$ such that $\lim_{h\downarrow 0} g(h)/h =0$.

\begin{remark}\label{rem:star1}\rm
Since $\bm{P}(s,t)$ may be defective for some values of $s$ and $t$, the process $\mathcal{J}$ may be terminating. In such a case, we let $\star$ denote an absorbing cemetery state (not included in $\mathcal{E}$) which is entered once/if $\mathcal{J}$ ever gets terminated. If this modification is needed, then the path realizations of $\mathcal{J}$ will be considered to belong to the space $\mathcal{D}(\mathbb{R}_+,\mathcal{E}\cup\{\star\})$ of c\`adl\`ag functions mapping from $\mathbb{R}_+$ to $\mathcal{E}\cup\{\star\}$.
\end{remark}

Given an intensity matrix function $\bm{\Lambda}(\cdot)$, we can always construct an appropriate probability space where a time-inhomogeneous Markov jump process $\mathcal{J}$ driven by $\bm{\Lambda}$ exists \cite[Section 3]{gill1994lectures}. A particularly simple construction through the method of uniformization (see e.g. \cite{van2018uniformization} for a modern exposition) is possible under the assumption that $\bm{\Lambda}(\cdot)$ is uniformly bounded: we will provide an account of such construction in Section~\ref{sec:strongMJP}.

\begin{remark}\rm
The class of time-inhomogeneous Markov jump processes defined here is by no means the most general one. For instance, using the general theory of product integration w.r.t. interval additive functionals (see e.g. \cite{gill1987product}) we can consider semigroups (and in turn time-inhomogeneous Markov jump processes) that allow for jumps to occur at deterministic epochs. This and other constructions will not be discussed further in this manuscript.
\end{remark}

The two main existing techniques to calculate the transition probabilities (\ref{eq:transitionprob1}) rely on computing the product integral (\ref{eq:prodint1}) by either solving an associated ODE \cite[Chapter 1]{Dollard1984ProductIW}, or by approximating it via a discretization argument (\cite{helton1976numerical,van1992uniformization}). If the process $\mathcal{J}$ happens to be time-homogeneous, that is, $\bm{\Lambda}(s)=\bm{\Lambda}$ for all $s\ge 0$, then its transition probabilities (\ref{eq:transitionprob1}) correspond to the $(i,j)$-th entry of
\[\bm{I} + \sum_{k=1}^\infty \frac{\Lambda^k (t-s)^k}{k!}=: e^{\bm{\Lambda}(t-s)}.\]
The matrix-exponential function $x\rightarrow e^{\bm{\Lambda}x}$ is easier to compute and manipulate than the general product integral (\cite{moler2003nineteen}), which is partly the reason why the use of time-homogeneous models in applied probability is more widespread than their time-inhomogeneous counterpart. {\red The emphasis of our construction is thus on simplicity, though this does not necessarily translate into the highest computational efficiency across alternative methods. Instead, }we believe that bridging the theory of time-inhomogeneous Markov jump processes with that of time-homogeneous Markov jump processes via pathwise approximations will improve the applicability of the former in areas where the latter have already been proven to be tractable. For this reason, we now discuss some aspects of the convergence of stochastic processes.

When talking about the convergence of stochastic processes, there are two main notions to be considered: weak and strong. Weak convergence describes how the \emph{laws} associated with the paths of a sequence of processes converge to the law of another process; these processes do not need to exist in the same probability space. On the other hand, strong convergence is interested in measuring the \emph{distance of paths} and guaranteeing that a.s. they converge to each other w.r.t. some metric or topology; these processes need to be defined in a common probability space. As their names indicate, any sequence of stochastic processes which converges strongly will also do it in a weak sense, but the converse is in general not true. 

The main disadvantage when approaching strong convergence of stochastic processes is the lack of general guidelines on how to construct the \emph{appropriate} probability space where convergence occurs: this has to be tailored on a case-by-case basis. In comparison, weak convergence has a well-established set of conditions that have to be verified (for example, tightness of measures and finite-dimensional distributional convergence) which although tedious, do not require any particular probability space construction. Nevertheless, once a strongly convergent approximation is established, its construction is usually transparent, not only providing a more powerful result than its weak counterpart but also bypassing some of its technical difficulties.

Since one of the aims of this paper is to establish strong approximations for some class of c\`adl\`ag processes, first we ought to properly define what \emph{convergence} means in their set of trajectories. It is well-known \cite[Page 289]{jacod2013limit} that assigning a \emph{sensible} topology (and in turn characterizing convergent sequences) to the space $\mathcal{D}(\mathbb{R}_+,\mathcal{E})$ is more involved than, for example, to the space of continuous functions $\mathcal{C}(\mathbb{R}_+,\mathbb{R})$. For instance, if we were to assign to $\mathcal{D}(\mathbb{R}_+,\mathcal{E})$ the topology inherited by the norm
\[\Vert y-z\Vert_\infty =\sum_{k=1}^\infty 2^{-k} \sup_{s\le {  c_k}}\,d_\mathcal{E}(y(s),z(s)),\quad y,z\in \mathcal{D}(\mathbb{R}_+,\mathcal{E}),\]
where $d_\mathcal{E}$ denotes the discrete metric on $\mathcal{E}$ (i.e. $d_{\mathcal{E}}(i,j)= 1-\delta_{ij}$) and {  $\{c_k\}_k$ is an increasing sequence that converges to $\infty$}, then the $2^{-k}$-radius open ball around $y$  would only contain those paths which coincide \emph{exactly} with $y$ in the interval $[0,c_{k+1}]$. While this is a perfectly fine topology, its use is limited by the fact that the only convergent sequences are those whose elements coincide with the limiting element over increasing compact intervals. 

A more \emph{helpful} topology is $J_1$ (also known as Skorokhod topology), which allows for a time distortion to occur as long as this distortion is close to the \emph{real} running time. {  Namely, the $J_1$ topology on $\mathcal{D}(\mathbb{R}_+,\mathcal{E})$ is inherited by the $J_1$ metric which takes the form
\begin{equation}\label{eq:J1met}d_{J_1}(y,z) =\inf_{\Delta \in \mathds{H}}\left\{\left(\sum_{k=1}^\infty 2^{-k} \sup_{s\le c_k}\,d_\mathcal{E}(y(\Delta(s)),z(s))\right)\vee \sup_{s\ge 0}|\Delta(s)-s| \right\},\quad y,z\in \mathcal{D}(\mathbb{R}_+,\mathcal{E}),\end{equation}
where $\mathds{H}$ denotes the set of homeomorphic functions on $\mathbb{R}_+$.
In simple terms, the distance between $y$ and $z$ is strictly less than $2^{-k}$ if and only if there exists an homeomorphic time-change function $\Delta_*$ such that: $y\circ\Delta_*$ restricted to $[0,c_{k+1}]$ is identical to $z$, and $\Delta_*$ is uniformly less than $2^{-k}$ units from the identity function on $\mathbb{R}_+$.}

\begin{remark}\rm
The $J_1$ metric renders the space $\mathcal{D}(\mathbb{R}_+,\mathcal{E})$ complete and separable \cite[Pages 295-298]{jacod2013limit}, thus {  a Polish space}. While other (weaker) topologies for $\mathcal{D}(\mathbb{R}_+,\mathcal{E})$ exist, their use in the literature is not as widespread as $J_1$, mainly because they are more difficult to analyze; see \cite[Chapter 12]{whitt2002stochastic} for alternative topologies  to $J_1$. 
\end{remark}

\section{Strong convergence to time-inhomogeneous Markov jump processes}\label{sec:strongMJP}
As before, let $\mathcal{J}$ denote a (possibly terminating) time-inhomogeneous Markov jump process on a finite state-space $\mathcal{E}$ with initial distribution $\bm{\alpha}=(\alpha_i)_{i\in \mathcal{E}}$ and intensity matrix function $\bm{\Lambda}(t)=\{\Lambda_{ij}(t)\}_{i,j\in\mathcal{E}}$, $0\le t<\infty$. From now on, we assume that the following holds.

\textbf{Condition 1}
The family $\{\bm{\Lambda}(t)\}_{t\ge 0}$ is uniformly bounded, that is
\[\sup_{i\in\mathcal{E}, t\ge 0} |\Lambda_{ii}(t)|<\infty.\]

The goal of this section is to construct a sequence of tractable jump processes $\{\JJ{n}\}_n$ which converges \emph{strongly} to $\mathcal{J}$ as $n\rightarrow\infty$. To establish strong convergence of a sequence of stochastic processes $\{\JJ{n}\}$ to $\mathcal{J}$ we have to:

\begin{enumerate}
	\item Construct a common probability space where $\{\JJ{n}\}_n$ and $\mathcal{J}$ defined,\label{it:strong1}
	\item Guarantee that, as $n\to\infty$, $\JJ{n}$ converges to $\mathcal{J}$ a.s. with respect to the $J_1$ topology on $\mathcal{D}(\mathbb{R}_+,\mathcal{E})$. \label{it:strong2}
\end{enumerate}
Below we provide a detailed description of each one of these steps.

\textbf{Step (\ref{it:strong1}): Construction.} Consider a probability space $(\Omega,\mathcal{F},\mathbb{P})$ which supports the following independent components:
\begin{itemize}
	\item Two independent Poisson processes $\mathcal{N}=\{N(t)\}_{t\ge 0}$ and $\mathcal{M}=\{M(t)\}_{t\ge 0}$ with common intensity $\lambda_0 := \lceil\sup_{i\in\mathcal{E}, t\ge 0} |\Lambda_{ii}(t)|\rceil$;
	\item Two independent sequences of independent Poisson processes $\{\widehat{\mathcal{N}}^{(k)}\}_{k=\lambda_0+1}^\infty$ and $\{\widehat{\mathcal{M}}^{(k)}\}_{k=\lambda_0+1}^\infty$ of parameter $1$;
   \item A sequence of independent and identically distributed $\mbox{U}(0,1)$ random variables $\{U_\ell\}_{\ell=0}^\infty$.
\end{itemize}
Such a space can always be constructed by Kolmogorov's extension theorem.
With these elements at hand, we construct the processes $\mathcal{J}$ and $\JJ{n}$, $n\ge \lambda_0$ via a uniformization argument as follows. 

First, let $J(0)=i$ if $U_0\in[\sum_{j\le i}\alpha_j, \sum_{j\le i+1}\alpha_j)$, which guarantees that $J(0)\sim \bm{\alpha}$. Now, let $\mathcal{J}$ be constant between the arrival times $\{\chi_\ell\}$ of $\mathcal{N}$ (with $\chi_0:=0$), meaning that
\[J(t)=J\left(\chi_\ell\right)\quad \mbox{for}\quad t\in\left[\chi_\ell, \chi_{\ell+1}\right), \,\ell\ge 0.\]
The epochs $\{\chi_\ell\}_{\ell\ge 1}$ will serve as the (possible) jumping times of $\mathcal{J}$: conditional on the event $\{J\big(\chi_{\ell}-\big)=i\}$, $i\in\mathcal{E}$,
\begin{align}
J(\chi_\ell)=\left\{\begin{array}{ccc}k &\mbox{if}&   U_\ell\in\left[\sum_{j\le k}\lambda^{-1}_0\Lambda_{ij}(\chi_{\ell}) +\delta_{ij}, \sum_{j\le k+1}\lambda^{-1}_0\Lambda_{ij}(\chi_{\ell}) +\delta_{ij}\right),\\
\mbox{terminated}&\mbox{if}& U_\ell\in\left[1 + \sum_{j\in\mathcal{E}}\lambda^{-1}_0\Lambda_{ij}(\chi_{\ell}), 1\right);\end{array}\right.\label{eq:Jtauell1}
\end{align}
if $\mathcal{J}$ is terminated at time $\chi_{\ell}$, it remains terminated forever. Equation (\ref{eq:Jtauell1}) implies that, conditional on $\{\chi_{\ell}\}_{\ell\ge 0}$, the process $\{J(\chi_\ell)\}_{\ell\ge 0}$ is a (possibly terminating) Markov chain with time-inhomogeneous transition probability matrices $\lambda_0^{-1}\bm{\Lambda}(\chi_\ell) + \bm{I}$, $\ell\ge 1$. This construction corresponds to the classic uniformization of a Markov jump process $\mathcal{J}$ with time-inhomogeneous intensities $\{\bm{\Lambda}(t)\}_{t\ge 0}$ and is widely used, e.g. for simulation purposes (see e.g. \cite{van2018uniformization}). 

To construct the approximating sequence $\JJ{n}$, $n\ge \lambda_0$, the idea is to inspect $\mathcal{J}$ at an increasingly denser set of Poisson epochs, and place such observations at the arrival epochs of an independent Poisson process with identical distibutional characteristics. To this end, for $n > \lambda_0$, let $\mathcal{N}^{(n)}$ be the superposition of $\mathcal{N}$ and $\widehat{\mathcal{N}}^{(\lambda_0+1)},\dots, \widehat{\mathcal{N}}^{(n)}$; $\mathcal{N}^{(n)}$ is then a Poisson process of parameter $n$, which we will use as ``inspection'' epochs of $\mathcal{J}$. Similarly, let $\mathcal{M}^{(n)}$ be the superposition of $\mathcal{M}$ and $\widehat{\mathcal{M}}^{(\lambda_0+1)},\dots, \widehat{\mathcal{M}}^{(n)}$; the Poisson process $\mathcal{M}^{(n)}$ will serve as the random grid of $\JJ{n}$ where we will place the observations of $\mathcal{J}$. More specifically, we let
\begin{equation}\label{eq:defJnt}J^{(n)}(t)= J\left(\chi^{(n)}_{\ell}\right)\quad\mbox{for}\quad t\in \left[\theta_{\ell}^{(n)}, \theta_{\ell+1}^{(n)}\right),\end{equation}
where $\big\{\chi_{\ell}^{(n)}\big\}_{\ell\ge 0}$ and $\big\{\theta^{(n)}_{\ell}\big\}_{\ell\ge 0}$ correspond to the arrival times of $\mathcal{N}^{(n)}$ and $\mathcal{M}^{(n)}$, respectively. Below we show how $\JJ{n}$, {   conditional on $\mathcal{N}^{(n)}$}, can be embedded into a time-homogeneous Markov jump process.

\begin{theorem}\label{th:approximation1}
Fix $n\ge \lambda_0$ and let $\mathcal{J}$ and $\JJ{n}$ be the above processes constructed in $(\Omega,\mathcal{F},\mathbb{P})$. Then the following statements hold.
\begin{enumerate} 
   \item[(a)] $\big\{J\big(\chi^{(n)}_{\ell}\big)\big\}_{\ell\ge 0}$ {   conditional on $\mathcal{N}^{(n)}$} is an inhomogeneous discrete-time Markov chain with initial distribution $\bm{\alpha}$ and transition probability matrix at the $\ell$-th step given by
\begin{equation}\label{eq:Qnell1}
{  \bm{Q}_\ell^{(n)}=\bm{I} + \frac{\bm{\Lambda}(\chi_{\ell}^{(n)})}{n}.}\end{equation}
\item[(b)] The bivariate process $\big\{\big(L^{(n)}(t), J^{(n)}(t)\big)\big\}_{t\ge 0}$ given by
\begin{equation}\label{eq:bivariate1}\left(L^{(n)}(t), J^{(n)}(t)\right)= \left(\ell, J\left(\chi^{(n)}_{\ell}\right)\right)\quad\mbox{for}\quad t\in \left[\theta^{(n)}_{\ell}, \theta^{(n)}_{\ell+1}\right),\end{equation}
{   conditional on $\mathcal{N}^{(n)}$}, is a time-homogeneous Markov jump process with initial distribution $(\bm{\alpha},\bm{0},\bm{0},\dots)$ and intensity matrix given by
\begin{equation}\label{eq:intensityinfinite1}\begin{pmatrix}-n\bm{I}&n\bm{Q}^{(n)}_1& &&\\
&-n\bm{I}&n\bm{Q}^{(n)}_2& &\\
&&-n\bm{I}&n\bm{Q}^{(n)}_3&\\
&&&\ddots&\ddots
\end{pmatrix},\end{equation}
where the state-space $\mathbb{N}_0\times\mathcal{E}$ is to be understood in a lexicographic sense.
\end{enumerate}
\end{theorem}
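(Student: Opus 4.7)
The strategy is to exploit the superposition/coloring structure of the Poisson processes together with the independence of the remaining random ingredients. Since $\mathcal{N}^{(n)}$ is the superposition of $\mathcal{N}$ and $\widehat{\mathcal{N}}^{(\lambda_0+1)},\dots,\widehat{\mathcal{N}}^{(n)}$ with rates $\lambda_0, 1,\dots,1$, the Poisson coloring theorem implies that, conditional on $\mathcal{N}^{(n)}$, the labels identifying which component process each epoch $\chi_\ell^{(n)}$ comes from form an i.i.d.\ sequence with $\mathbb{P}(\text{origin}=\mathcal{N}\mid \mathcal{N}^{(n)}) = \lambda_0/n$, independent of $\{U_\ell\}$ and of $\mathcal{M}, \{\widehat{\mathcal{M}}^{(k)}\}$. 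This mechanism drives both parts.

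For part (a), I would first recall that the original uniformization construction makes $\{J(\chi_k)\}_{k\ge 0}$ an inhomogeneous discrete-time Markov chain with transition matrix $\lambda_0^{-1}\bm{\Lambda}(\chi_k) + \bm{I}$ at step $k$, and $J(0)\sim \bm{\alpha}$. Inspecting $\mathcal{J}$ on the denser grid $\{\chi_\ell^{(n)}\}$ then amounts to interleaving this original chain with \emph{phantom} steps (contributed by the $\widehat{\mathcal{N}}^{(k)}$) that leave the state unchanged. Mixing over the coloring gives, conditional on $\mathcal{N}^{(n)}$,
\[\frac{\lambda_0}{n}\bigl(\lambda_0^{-1}\bm{\Lambda}(\chi_\ell^{(n)}) + \bm{I}\bigr) + \Bigl(1-\frac{\lambda_0}{n}\Bigr)\bm{I} = \bm{I} + \frac{\bm{\Lambda}(\chi_\ell^{(n)})}{n} = \bm{Q}_\ell^{(n)},\]
which is the claimed step-$\ell$ transition matrix. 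The Markov property is inherited because the coloring is i.i.d.\ across $\ell$, the $\{U_\ell\}$ are i.i.d., and only the single uniform $U_k$ associated to a potential $\mathcal{N}$-epoch is consumed at each step.

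For part (b), the additional ingredient is that $\mathcal{M}^{(n)}$ is independent of $(\mathcal{N}^{(n)}, \{J(\chi_\ell^{(n)})\}_\ell)$, since $\mathcal{M}$ and $\{\widehat{\mathcal{M}}^{(k)}\}$ were chosen independently of everything used to build $\mathcal{J}$ and $\mathcal{N}^{(n)}$. Conditional on $\mathcal{N}^{(n)}$, the inter-arrival times $\theta_{\ell+1}^{(n)}-\theta_\ell^{(n)}$ are therefore i.i.d.\ Exp($n$) and independent of the state trajectory. Consequently, from state $(\ell, i)$ at time $\theta_\ell^{(n)}$ the holding time is Exp($n$) and, by part (a), the next state is $(\ell+1, j)$ with probability $[\bm{Q}_{\ell+1}^{(n)}]_{ij}$, producing the off-diagonal block $n\bm{Q}_{\ell+1}^{(n)}$ and diagonal $-n\bm{I}$ in~(\ref{eq:intensityinfinite1}); the initial condition $(L^{(n)}(0), J^{(n)}(0))=(0,J(0))$ is immediate from the construction. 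The main delicacy, and the step requiring the most care, is keeping track of what is conditioned on: we condition on $\mathcal{N}^{(n)}$ (the aggregated epochs) but integrate over the coloring, so the weighting $\lambda_0/n$ must be justified via the coloring theorem rather than by inspection, and the various independence assertions must be verified against the independence chain assumed for $\{U_\ell\}, \mathcal{M}, \mathcal{N}, \{\widehat{\mathcal{N}}^{(k)}\}, \{\widehat{\mathcal{M}}^{(k)}\}$. Once these independence structures are pinned down, the remaining computations are routine.
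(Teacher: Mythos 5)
Your proposal is correct and follows essentially the same route as the paper: part (a) is proved by conditioning on $\mathcal{N}^{(n)}$ and splitting on whether the $\ell$-th epoch of the superposition is an arrival of $\mathcal{N}$ (probability $\lambda_0/n$, which you justify via the coloring theorem and the paper computes directly), yielding the same mixture $\tfrac{\lambda_0}{n}(\lambda_0^{-1}\bm{\Lambda}+\bm{I})+(1-\tfrac{\lambda_0}{n})\bm{I}$; part (b) is the same lift to the level-augmented chain followed by the standard uniformization argument using independence of $\mathcal{M}^{(n)}$ from the state trajectory.
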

\begin{proof}\textbf{Proof.}

\begin{enumerate}
  \item[(a)] Let $i,j\in\mathcal{E}$, $\ell\ge 1$ 
 {   and denote by $\Pg$ the probability measure $\P$ conditioned on the $\sigma$-algebra $\mathcal{G}_n$ generated by $\mathcal{N}^{(n)}=\big\{\chi_{\ell}^{(n)}\big\}_{\ell\ge 0}$.} Then,
  \begin{align*}
   {  \Pg}&\left( J(\chi^{(n)}_{\ell})=j \mid J(\chi^{(n)}_{\ell-1})=i\right)\\
   & =  {  \Pg}\left( J(\chi^{(n)}_{\ell})=j, {  N(\chi^{(n)}_{\ell}-)\neq N(\chi^{(n)}_{\ell})} \mid J(\chi^{(n)}_{\ell}-)=i\right)\\
   &\quad + {  \Pg}\left( J(\chi^{(n)}_{\ell})=j, {  N(\chi^{(n)}_{\ell}-) = N(\chi^{(n)}_{\ell})} \mid J({  \chi^{(n)}_{\ell}-})=i\right)\\
   & = {  \left(\frac{\lambda_0}{n}\right) \left(\lambda^{-1}_0\Lambda_{ij}(\chi^{(n)}_{\ell}) + \delta_{ij}\right) + \left(\frac{n-\lambda_0}{n}\right)\delta_{ij}}\\
   & = {   n^{-1}  \Lambda_{ij}(\chi^{(n)}_{\ell}) + \delta_{ij}},
   \end{align*}
   from which (\ref{eq:Qnell1}) follows. Also, recall that $J(\chi^{(n)}_0) = J(0)\sim\bm{\alpha}$ by construction. 
   \item[(b)] It is standard that the enlarged process $\big\{\big(\ell, J\big(\chi^{(n)}_{\ell}\big)\big)\big\}_{\ell\ge 0}$, {   conditional on $\mathcal{N}^{(n)}$}, is a time-homogeneous Markov chain with initial distribution $(\bm{\alpha},\bm{0},\bm{0},\dots)$ and transition probability matrix given by
   \[\begin{pmatrix}\bm{0}&\bm{Q}^{(n)}_1& &&\\
&\bm{0}&\bm{Q}^{(n)}_2& &\\
&&\bm{0}&\bm{Q}^{(n)}_3&\\
&&&\ddots&\ddots
\end{pmatrix}.\]
Furthermore, since $\big\{\big(\ell, J\big(\chi^{(n)}_{\ell}\big)\big)\big\}_{\ell\ge 0}$ is independent of $\mathcal{M}^{(n)}$ by construction, then $\big\{\big(L^{(n)}(t), J^{(n)}(t)\big)\big\}_{t\ge 0}$ as defined in (\ref{eq:bivariate1}), {   when conditioned on $\mathcal{N}^{(n)}$}, corresponds to a uniformized Markov jump process with intensity matrix of the form (\ref{eq:intensityinfinite1}) (see for instance \cite[Section 3.2]{van2018uniformization}).
\end{enumerate}
$\hfill\square$\end{proof}
In conclusion, Theorem \ref{th:approximation1} implies that the process $\JJ{n}$ {   conditional on $\mathcal{N}^{(n)}$} is simply a projection of a time-homogeneous Markov jump process described by (\ref{eq:intensityinfinite1}).
{  
\begin{theorem}\label{th:transition_prob1}
The transition probabilities of $\mathcal{J}^{(n)}$ conditional on $\mathcal{N}^{(n)}$, from time $s$ to $t$ ($t\ge s\ge 0$), are given by
\begin{align}\label{eq:ftaun}
\bm{P}^{(n)}(s,t)&=\sum_{\ell=0}^\infty \sum_{k=0}^\infty \mathrm{Poi}_{ns}(k)\times \mathrm{Poi}_{n(t-s)}(\ell)\times \left[\bm{Q}^{(n)}_{k+1}\bm{Q}^{(n)}_{k+2}\cdots\bm{Q}^{(n)}_{k+\ell}\right],
\end{align}
with {\red $\mathrm{Poi}_{\lambda}(m)=\tfrac{\lambda^m}{m!} e^{-\lambda}$}. Here, empty matrix products are defined as the identity matrix.
\end{theorem}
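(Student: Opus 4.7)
The plan is to compute the stated transition probabilities by decomposing $J^{(n)}(u)=J(\chi^{(n)}_{L^{(n)}(u)})$ for $u\in\{s,t\}$, where $L^{(n)}(u)$ denotes the number of arrivals of $\mathcal{M}^{(n)}$ in $[0,u]$. By construction, $\mathcal{M}^{(n)}$ is a rate-$n$ Poisson process independent of both $\mathcal{N}^{(n)}$ and of the uniform variables $\{U_\ell\}$ driving the embedded chain $\{J(\chi^{(n)}_m)\}_{m\ge 0}$. Consequently, the pair $(L^{(n)}(s),\, L^{(n)}(t)-L^{(n)}(s))$ consists of independent random variables with Poisson distributions of parameters $ns$ and $n(t-s)$, and this pair is independent of $\mathcal{G}_n$ and of the embedded chain.

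First I would condition on the event $\{L^{(n)}(s)=k,\, L^{(n)}(t)-L^{(n)}(s)=\ell\}$ for fixed $k,\ell\ge 0$. On this event one has $J^{(n)}(s)=J(\chi^{(n)}_k)$ and $J^{(n)}(t)=J(\chi^{(n)}_{k+\ell})$. By Theorem \ref{th:approximation1}(a), conditional on $\mathcal{G}_n$, the chain $\{J(\chi^{(n)}_m)\}$ is time-inhomogeneous Markov with one-step transition matrix $\bm{Q}^{(n)}_{m+1}$ from step $m$ to $m+1$. Chapman--Kolmogorov then yields that the $\ell$-step transition probability from state $i$ at step $k$ to state $j$ at step $k+\ell$ equals $[\bm{Q}^{(n)}_{k+1}\bm{Q}^{(n)}_{k+2}\cdots\bm{Q}^{(n)}_{k+\ell}]_{ij}$, with the empty product convention covering $\ell=0$.

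Next, I would assemble the full formula by summing over $k,\ell\ge 0$ weighted by $\mathrm{Poi}_{ns}(k)\mathrm{Poi}_{n(t-s)}(\ell)$, using the independence established above to split the joint probability into a product. The main subtlety, and the only real obstacle, is interpretational: the marginal process $\mathcal{J}^{(n)}$ is not Markov (even after conditioning on $\mathcal{G}_n$); only the bivariate process of Theorem \ref{th:approximation1}(b) is. One must therefore read ``transition probability from $s$ to $t$'' as the natural semigroup-like quantity obtained by endowing the latent level $L^{(n)}(s)$ at time $s$ with its prior $\mathrm{Poi}(ns)$ marginal independently of the starting state $i$; this is exactly the decoupling that allows the double Poisson weighting in \eqref{eq:ftaun} to appear. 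Once this interpretation is fixed, the remainder is a routine application of independence of Poisson increments and Chapman--Kolmogorov for the embedded chain, and I anticipate no further technical difficulties.
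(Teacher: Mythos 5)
Your proposal is correct and follows essentially the same route as the paper's proof: decompose over the counts of $\mathcal{M}^{(n)}$-arrivals in $[0,s]$ and $(s,t]$, use their independence from the embedded chain to pull out the two Poisson weights, and apply Chapman--Kolmogorov to the inhomogeneous chain of Theorem \ref{th:approximation1}(a) to obtain the matrix product $\bm{Q}^{(n)}_{k+1}\cdots\bm{Q}^{(n)}_{k+\ell}$. Your additional remark on the interpretational point --- that the latent level at time $s$ must be assigned its prior $\mathrm{Poi}(ns)$ marginal independently of the starting state, since $\mathcal{J}^{(n)}$ alone is not Markov given $\mathcal{N}^{(n)}$ --- is a fair clarification of a step the paper's proof passes over silently.
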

\begin{proof}\textbf{Proof.}
Using the notation in the proof of Theorem \ref{th:approximation1}, we have that 
\begin{align*}
\left(\bm{P}^{(n)}(s,t)\right)_{ij} & = \Pg (J^{(n)}(t)=j\mid J^{(n)}(s)=i)\\
&=\sum_{k=0}^\infty \sum_{\ell=0}^\infty \Pg (J^{(n)}(t)=j, M^{(n)}(s) = k, {\red M^{(n)}(t) - M^{(n)}(s) = \ell}\mid J^{(n)}(s)=i)\\
&=\sum_{k=0}^\infty \sum_{\ell=0}^\infty \mathrm{Poi}_{ns}(k)\times \mathrm{Poi}_{n(t-s)}(\ell)\times \Pg (J (\chi^{(n)}_{k+\ell})=j\mid J (\chi^{(n)}_{k}) = i)\\
& = \sum_{\ell=0}^\infty \sum_{k=0}^\infty \mathrm{Poi}_{ns}(k)\times \mathrm{Poi}_{n(t-s)}(\ell)\times \left[\bm{Q}^{(n)}_{k+1}\bm{Q}^{(n)}_{k+2}\cdots\bm{Q}^{(n)}_{k+\ell}\right]_{ij},
\end{align*}
from which the result follows.
$\hfill\square$\end{proof}}

\textbf{Step (\ref{it:strong2}): Convergence.} Following the reasoning in Remark \ref{rem:star1}, let us work in the space $\mathcal{D}(\mathbb{R}_+,\mathcal{E}_\star)$ (where $\mathcal{E}_\star=\mathcal{E}\cup\{\star\}$) instead of $\mathcal{D}(\mathbb{R}_+,\mathcal{E})$, with $\star$ denoting the terminating state. If termination never happens, the extra state $\star$ will never get entered. Endow $\mathcal{D}(\mathbb{R}_+,\mathcal{E}_\star)$ with the $J_1$ topology which is inherited by (\ref{eq:J1met}).

To show convergence of $\JJ{n}$ to $\mathcal{J}$ it is sufficient to exhibit a sequence of random homeomorphic functions $\{\Delta_n\}_n$ such that for each $\omega\in\Omega$ (more precisely for all $\omega$'s in some set of measure $1$),
\begin{align}\sup_{s\in[0,K]}d_{\mathcal{E}_\star} \left(J^{(n)}\big(\Delta_n(s)\big)(\omega),\, J(s)(\omega)\right)\rightarrow 0\quad\mbox{as}\quad n\rightarrow {  \infty}\quad\mbox{for all}\quad K\in \mathbb{N}_0,\label{eq:strongJ5}
\end{align}
\begin{align}
\sup_{s\ge 0}|\Delta_n(s)(\omega)-s|\rightarrow 0\quad\mbox{as}\quad n\rightarrow {  \infty}.\label{eq:strongJ6}
\end{align}
Here we define 
\begin{align}
\Delta_n(t)=\left\{\begin{array}{ccc}  t\frac{\theta_1^{(n)}}{\chi_1^{(n)}}&\mbox{for}& t\in \left[0,\chi_1^{(n)}\right)\\
\theta^{(n)}_\ell + (t-\chi^{(n)}_\ell)\frac{\theta_{\ell+1}^{(n)}-\theta_{\ell}^{(n)}}{\chi_{\ell+1}^{(n)}-\chi_{\ell}^{(n)}}&\mbox{for}& t\in \left[\chi_\ell^{(n)}, \chi_{\ell+1}^{(n)}\right),\,\ell<\lfloor n^{1+\varepsilon}\rfloor\\
\theta^{(n)}_{\lfloor n^{1+\varepsilon}\rfloor} + (t-\chi^{(n)}_{\lfloor n^{1+\varepsilon}\rfloor})&\mbox{for}& t\in \left[\chi_{\lfloor n^{1+\varepsilon}\rfloor}^{(n)}, \infty\right).
\end{array}\right.\label{eq:defDelta1}
\end{align}
where $\varepsilon>0$ is arbitrary but fixed. The function $\Delta_n(\cdot)$ in (\ref{eq:defDelta1}) is a.s. strictly monotone and continuous with $\Delta_n(0)=0$ and $\lim_{t\rightarrow\infty}\Delta_n(t)=\infty$, thus homeomorphic. Additionally, $\Delta_n(\cdot)$ is piecewise linear, completely characterized by the values $\Delta_n(\chi_\ell^{(n)})=\theta_\ell^{(n)}$, $\ell=1,\dots,\lfloor n^{1+\varepsilon}\rfloor$, and the slope $1$ after time $\chi_{\lfloor n^{1+\varepsilon}\rfloor}^{(n)}$. Thus,
\begin{align}
J^{(n)}(\Delta_n(s))= J(s)\quad\mbox{for all}\quad s\in\left[0,\chi^{(n)}_{\lfloor n^{1+\varepsilon}\rfloor}\right),\quad\mbox{and}\label{eq:convaux1}\\
\sup_{s\ge 0}|\Delta_n(s)-s| =\max_{\ell\in\{1,\dots, \lfloor n^{1+\varepsilon}\rfloor\}}\left|\theta_\ell^{(n)}-\chi_\ell^{(n)}\right|.\label{eq:convaux2}\end{align}
Equations (\ref{eq:convaux1}) and (\ref{eq:convaux2}) together with the following lemma will help us establish the desired strong convergence of $\JJ{n}$ to $\mathcal{J}$ in Theorem \ref{th:strongmain1} below.
\begin{lemma}\label{lem:ratestrong1}
For all $q> 0$ and $\varepsilon\in (0,1)$ there exists some $\kappa(q,\varepsilon )>0$ such that
\begin{align}
\mathbb{P}\left(\max_{\ell\in\{1,\dots, \lfloor n^{1+\varepsilon}\rfloor\}}\left|\theta_\ell^{(n)}-\chi_\ell^{(n)}\right|\ge \kappa(q,\varepsilon)(\log n)n^{-1/2+{  \varepsilon/2}}\right)=o(n^{-q}).\label{eq:ratestrong1}
\end{align}
\end{lemma}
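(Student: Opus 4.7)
My plan is to reduce the bound to a tail estimate for the running maximum of a mean-zero random walk and then apply sub-exponential concentration combined with a union bound over $\ell$.

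First, I would exploit the explicit construction of $\mathcal{N}^{(n)}$ and $\mathcal{M}^{(n)}$ as \emph{independent} rate-$n$ Poisson processes on $(\Omega,\mathcal{F},\mathbb{P})$ (they are obtained by superposition of independent building blocks) to write $\chi_\ell^{(n)}=n^{-1}\sum_{i=1}^\ell X_i$ and $\theta_\ell^{(n)}=n^{-1}\sum_{i=1}^\ell Y_i$, where $\{X_i\}_{i\ge 1}$ and $\{Y_i\}_{i\ge 1}$ are two independent families of i.i.d.\ $\mathrm{Exp}(1)$ random variables. Setting $D_\ell:=\sum_{i=1}^\ell(Y_i-X_i)$, the event in \eqref{eq:ratestrong1} becomes $\{\max_{\ell\le \lfloor n^{1+\varepsilon}\rfloor}|D_\ell|\ge \kappa(q,\varepsilon)(\log n)\,n^{1/2+\varepsilon/2}\}$, and $\{D_\ell\}_{\ell\ge 0}$ is a mean-zero random walk with i.i.d.\ Laplace-distributed increments.

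Second, from $\mathbb{E}[e^{\lambda(Y_1-X_1)}]=(1-\lambda^2)^{-1}$ for $|\lambda|<1$ combined with the elementary inequality $-\log(1-x)\le 2x$ on $x\in[0,1/2]$, I get $\log\mathbb{E}[e^{\lambda D_\ell}]\le 2\ell\lambda^2$ for $|\lambda|\le 1/2$. A standard Chernoff optimization, carried out separately in the regimes $s\le 2\ell$ and $s>2\ell$, then yields the Bernstein-type tail bound
$$\mathbb{P}(|D_\ell|\ge s)\le 2\exp\!\left(-\min\!\left(\tfrac{s^2}{8\ell},\tfrac{s}{4}\right)\right),\qquad s\ge 0.$$

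Third, I apply a union bound over $\ell\in\{1,\dots,\lfloor n^{1+\varepsilon}\rfloor\}$ at threshold $s_n:=\kappa(\log n)\,n^{1/2+\varepsilon/2}$. Since $\ell\le n^{1+\varepsilon}$ we have $s_n^2/(8\ell)\ge \kappa^2(\log n)^2/8$, and for $n$ large enough the same quantity lower-bounds $s_n/4$; hence each individual tail is at most $2\exp(-\kappa^2(\log n)^2/8)$ and
$$\mathbb{P}\!\left(\max_{\ell\le \lfloor n^{1+\varepsilon}\rfloor}|D_\ell|\ge s_n\right)\le 2\,n^{1+\varepsilon}\exp\!\left(-\kappa^2(\log n)^2/8\right).$$
The right-hand side is $o(n^{-q})$ for every fixed $q>0$ because $(\log n)^2$ dominates any multiple of $\log n$; in fact, any fixed $\kappa>0$ works, so $\kappa(q,\varepsilon)$ is effectively a free parameter (one may set $\kappa^2\ge 8(q+1+\varepsilon)$ for an explicit threshold valid for all $n$ sufficiently large). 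Dividing the event through by $n$ gives the bound stated in the lemma.

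The main obstacle is, essentially, bookkeeping: verifying that the Bernstein estimate is useful uniformly across both the Gaussian ($s\le 2\ell$) and sub-exponential ($s>2\ell$) regimes, and over the whole range of $\ell\le n^{1+\varepsilon}$ that enters the union bound. A slightly cleaner alternative would replace the union bound by Doob's maximal inequality applied to the positive submartingale $\{e^{\lambda D_\ell}\}_\ell$, controlling the running maximum in one shot and removing the need for case analysis on $\ell$; either route delivers the desired super-polynomial decay.
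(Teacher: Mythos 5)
Your proof is correct, and it takes a genuinely different route from the paper's. You work directly with the difference $\theta_\ell^{(n)}-\chi_\ell^{(n)}=n^{-1}\sum_{i=1}^{\ell}(Y_i-X_i)$, a mean-zero random walk with Laplace increments, and control its running maximum via exponential (Chernoff/Bernstein) moments together with a union bound over $\ell\le\lfloor n^{1+\varepsilon}\rfloor$; the polynomial factor $n^{1+\varepsilon}$ from the union bound is harmless against the $\exp(-\kappa^2(\log n)^2/8)$ decay. The paper instead centers each arrival time at its mean $\ell/n$, splits by the triangle inequality, and applies Doob's maximal inequality to the $2k$-th power with the $n$-dependent choice $k=\lfloor\log n\rfloor$, relying on an external bound for Erlang central moments (Lemma A.1 of \cite{nguyen2022rate}). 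Both arguments hinge on the same phenomenon — taking moments of order roughly $\log n$ to convert a polynomial deviation bound into a super-polynomial one — but yours is self-contained (the MGF computation replaces the Erlang moment lemma) and shows that \emph{any} fixed $\kappa>0$ works for $n$ large, whereas the paper tracks an explicit constant $\kappa(q,\varepsilon)=4e^{1/2+\varepsilon/2+2q}$ valid from an explicit $n$ onward, which it then reuses quantitatively in Theorems \ref{th:strongmain1} and \ref{gen_convs}. Your suggested refinement of replacing the union bound by Doob's inequality for the submartingale $e^{\lambda D_\ell}$ would bring the two proofs structurally closer while keeping the exponential-moment bookkeeping.
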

\begin{proof}\textbf{Proof.}
Let $\{\beta_n\}_{n\ge 1}$ denote an arbitrary positive sequence. Notice that $\mathbb{E}(\theta_\ell^{(n)})=\ell/n$, so that Doob's $L_p$-maximal inequality implies
\begin{align*}
\mathbb{P}\left(\max_{\ell\in\{1,\dots, \lfloor n^{1+\varepsilon}\rfloor\}}\left|\theta_\ell^{(n)}-\ell/n\right|\ge \beta_n\right)\le \frac{\mathbb{E}\left(\left(\theta_{\lfloor n^{1+\varepsilon}\rfloor}^{(n)}-\lfloor n^{1+\varepsilon}\rfloor/n)\right)^{2k}\right)}{\beta_n^{2k}}\quad\mbox{for all}\quad k\in\mathbb{N}_0.
\end{align*}
Using Lemma A.1. in \cite{nguyen2022rate} {  for $n\ge 4$}, we may bound Erlang central moments as follows:
\begin{align*}
\mathbb{E}\left(\left(\theta_{\lfloor n^{1+\varepsilon}\rfloor}^{(n)}-\lfloor n^{1+\varepsilon}\rfloor/n)\right)^{2k}\right)&\le \frac{(2k)!\sqrt{\lfloor n^{1+\varepsilon}\rfloor}}{n^{2k}}\frac{\sqrt{\lfloor n^{1+\varepsilon}\rfloor}^{2k+1}-1}{\sqrt{\lfloor n^{1+\varepsilon}\rfloor}-1}\le {  2}\sqrt{\lfloor n^{1+\varepsilon}\rfloor}\left(\frac{2k\sqrt{\lfloor n^{1+\varepsilon}\rfloor}}{n}\right)^{2k},
\end{align*}
{  where in the second inequality we used that $(2k)!\le (2k)^{2k}$ and $\tfrac{1}{\sqrt{\lfloor n^{1+\varepsilon}\rfloor} - 1}\le \tfrac{2}{\sqrt{\lfloor n^{1+\varepsilon}\rfloor}}$ for $n\ge 4$.}
Now, choosing $k=\lfloor \log n\rfloor$,
\begin{align*}
\mathbb{E}\left(\left(\theta_{\lfloor n^{1+\varepsilon}\rfloor}^{(n)}-\lfloor n^{1+\varepsilon}\rfloor/n)\right)^{2\lfloor \log n\rfloor}\right)&\le {  2}\sqrt{\lfloor n^{1+\varepsilon}\rfloor}\left(\frac{2\lfloor \log n\rfloor\sqrt{\lfloor n^{1+\varepsilon}\rfloor}}{n}\right)^{2\lfloor \log n\rfloor}\\
&\le {  2}\sqrt{n^{1+\varepsilon}}\left(2 (\log n)  n^{-1/2+\varepsilon/2}\right)^{2\lfloor \log n\rfloor},
\end{align*}
so that for $\beta_n={  2}\, C(\log n)n^{-1/2+\varepsilon/2}$ with {  $C\ge 1$ and $n\ge e^2$},
{  \begin{align}
\mathbb{P}\left(\max_{\ell\in\{1,\dots, \lfloor n^{1+\varepsilon}\rfloor\}}\left|\theta_\ell^{(n)}-\ell/n\right|\ge \beta_n\right) & \le \frac{{  2}\sqrt{n^{1+\varepsilon}}\left(2 (\log n)  n^{-1/2+\varepsilon/2}\right)^{2\lfloor \log n\rfloor}}{\left({  2} C(\log n)n^{-1/2+\varepsilon/2}\right)^{2\lfloor \log n\rfloor}}\label{eq:beta1}\\
& = \frac{{  2}\sqrt{n^{1+\varepsilon}}}{C^{2 \lfloor \log n\rfloor}}\le \frac{{  2}\sqrt{n^{1+\varepsilon}}}{C^{2 (\log n-1)}} \le \frac{{  2}\sqrt{n^{1+\varepsilon}}}{C^{\log n}}.\nonumber
\end{align} }
Note that choosing {  $C=e^{1/2+\varepsilon/2+2q}$} yields {  $2\sqrt{n^{1+\varepsilon}}\, C^{-\log n}=2n^{-2q}$}, rendering the l.h.s. of (\ref{eq:beta1}) an $o(n^{-q})$ function.
Since
\begin{equation}\label{eq:rateaux2}\mathbb{P}\left(\max_{\ell\in\{1,\dots, \lfloor n^{1+\varepsilon}\rfloor\}}\left|\theta_\ell^{(n)}-\ell/n\right|\ge \beta_n\right)=\mathbb{P}\left(\max_{\ell\in\{1,\dots, \lfloor n^{1+\varepsilon}\rfloor\}}\left|\chi_\ell^{(n)}-\ell/n\right|\ge \beta_n\right)=o(n^{-q})\quad\mbox{and}\end{equation}
\begin{align*}
\mathbb{P}\left(\max_{\ell\in\{1,\dots, \lfloor n^{1+\varepsilon}\rfloor\}}\left|\theta_\ell^{(n)}-\chi_\ell^{(n)}\right|\ge 2\beta_n\right)&\le \mathbb{P}\left(\max_{\ell\in\{1,\dots, \lfloor n^{1+\varepsilon}\rfloor\}}\left|\theta_\ell^{(n)}-\ell/n\right|\ge \beta_n\right)\\
&\quad + \mathbb{P}\left(\max_{\ell\in\{1,\dots, \lfloor n^{1+\varepsilon}\rfloor\}}\left|\chi_\ell^{(n)}-\ell/n\right|\ge \beta_n\right),
\end{align*}
then (\ref{eq:ratestrong1}) follows with $\kappa(q,\varepsilon )={  4 e^{1/2+\varepsilon/2+2q}}$.
$\hfill\square$\end{proof}
{ 
\begin{theorem}\label{th:strongmain1}
For all $q> 0$ and $\varepsilon\in (0,1)$ there exists some nonnegative sequence $\{c_k\}_k$ converging to $\infty$ such that
\begin{align}\label{eq:ratemainsquared}
\mathbb{P}\left({\red d_{J_1}(\mathcal{J}^{(n)}, \mathcal{J})}\ge 3\kappa(q,\varepsilon)(\log n)n^{-1/2+{  \varepsilon/2}}\right)=o(n^{-q}),
\end{align}
where $d_{J_1}$ depends on $\{c_k\}_k$ through (\ref{eq:J1met}). In particular, $\mathcal{J}^{(n)}$ converges strongly to $\mathcal{J}$ as $n\rightarrow\infty$ in the $J_1$ topology over $\mathcal{D}(\mathbb{R}_+,\mathcal{E}_\star)$.
\end{theorem}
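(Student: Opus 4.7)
My plan is to plug the explicit time-change $\Delta_n$ from \eqref{eq:defDelta1} into the variational definition \eqref{eq:J1met} of $d_{J_1}$ and then bound the two quantities inside the outer maximum separately. First I would fix the sequence $\{c_k\}_k$ in the metric to be any slowly growing deterministic sequence with $c_k\uparrow\infty$; for concreteness $c_k=k$ works throughout. Then I would restrict attention to the high-probability event $G_n$ defined by the two simultaneous estimates $\max_{\ell\le \lfloor n^{1+\varepsilon}\rfloor}|\theta_\ell^{(n)}-\chi_\ell^{(n)}|\le \kappa(q,\varepsilon)(\log n)n^{-1/2+\varepsilon/2}$ and $\max_{\ell\le \lfloor n^{1+\varepsilon}\rfloor}|\chi_\ell^{(n)}-\ell/n|\le \kappa(q,\varepsilon)(\log n)n^{-1/2+\varepsilon/2}$. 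Lemma \ref{lem:ratestrong1} together with the intermediate estimate \eqref{eq:rateaux2} appearing inside its proof then yield $\mathbb{P}(G_n^c)=o(n^{-q})$ by a union bound.

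On $G_n$, the sup-term $\sup_{s\ge 0}|\Delta_n(s)-s|$ is controlled immediately by \eqref{eq:convaux2} and the definition of $G_n$, giving at most $\kappa(q,\varepsilon)(\log n)n^{-1/2+\varepsilon/2}$. For the weighted-sum term I would exploit \eqref{eq:convaux1}: since $J^{(n)}(\Delta_n(s))=J(s)$ for all $s\in[0,\chi^{(n)}_{\lfloor n^{1+\varepsilon}\rfloor})$ and since on $G_n$ one has $\chi^{(n)}_{\lfloor n^{1+\varepsilon}\rfloor}\ge \lfloor n^{1+\varepsilon}\rfloor/n-\kappa(q,\varepsilon)(\log n)n^{-1/2+\varepsilon/2}\ge n^\varepsilon/2$ for $n$ large enough, each summand $\sup_{s\le c_k}d_\mathcal{E}(J^{(n)}(\Delta_n(s)),J(s))$ vanishes whenever $c_k\le n^\varepsilon/2$ and is bounded by $1$ otherwise. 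Hence the weighted sum is at most $\sum_{k\ge n^\varepsilon/2}2^{-k}\le 2^{1-\lceil n^\varepsilon/2\rceil}$, which is eventually negligible compared to $\kappa(q,\varepsilon)(\log n)n^{-1/2+\varepsilon/2}$. Taking the outer maximum on $G_n$ then gives \eqref{eq:ratemainsquared} for $n$ large enough, with the constant $3$ absorbing pre-asymptotic slack.

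The conceptual obstacle is that the sequence $\{c_k\}_k$ defining the metric must be fixed before inspecting $n$, whereas the natural horizon on which $\mathcal{J}^{(n)}$ and $\mathcal{J}$ match via $\Delta_n$ is the \emph{random} time $\chi^{(n)}_{\lfloor n^{1+\varepsilon}\rfloor}\sim n^\varepsilon$. The resolution is that $\chi^{(n)}_{\lfloor n^{1+\varepsilon}\rfloor}$ diverges polynomially in $n$ while the weights $2^{-k}$ decay geometrically, so any fixed $c_k\uparrow\infty$ suffices---the random horizon by itself renders the weighted sum super-polynomially small. A subtler but important point is that one needs concentration of $\chi^{(n)}_{\lfloor n^{1+\varepsilon}\rfloor}$ itself, not merely of the discrepancies $\theta^{(n)}_\ell-\chi^{(n)}_\ell$, but \eqref{eq:rateaux2} provides exactly this for free. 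Finally, the ``in particular'' claim follows from a routine Borel--Cantelli argument: choosing any $q>1$, the probabilities in \eqref{eq:ratemainsquared} are summable, so almost surely $d_{J_1}(\mathcal{J}^{(n)},\mathcal{J})\le 3\kappa(q,\varepsilon)(\log n)n^{-1/2+\varepsilon/2}$ for all but finitely many $n$, and the right-hand side tends to $0$ as $n\to\infty$.
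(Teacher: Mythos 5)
Your proof is correct and follows essentially the same route as the paper's: the same time-change $\Delta_n$, the same reliance on Lemma \ref{lem:ratestrong1} together with the intermediate bound \eqref{eq:rateaux2} for the concentration of $\chi^{(n)}_{\lfloor n^{1+\varepsilon}\rfloor}$, and the same identification of the three sources of error. The only (harmless) difference is your choice $c_k=k$ with the entire weighted sum controlled by the geometric tail $\sum_{k\ge n^{\varepsilon}/2}2^{-k}$, whereas the paper takes $c_k=\lfloor k^{1+\varepsilon}\rfloor/k-\beta_k$ and peels off the single term $\sup_{s\le c_n}$ plus a $2^{-n}$ remainder; both work for the same reason, namely that the matching horizon $\chi^{(n)}_{\lfloor n^{1+\varepsilon}\rfloor}$ grows polynomially while the weights decay geometrically.
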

\begin{proof}\textbf{Proof.}
Take $c_n=\lfloor n^{1+\varepsilon}\rfloor /n - \beta_n$ where $\beta_n=2 e^{1/2+\varepsilon/2+2q} (\log n)n^{-1/2+\varepsilon/2}$; this particular choice for $\{c_n\}_n$ will become apparent later in (\ref{eq:eventsaux1}). Notice that $\{c_n\}_n$ may not be increasing, but eventually increasing. Now, for any $m\ge 1$ such that $c_m\ge \max\{c_1,\cdots,c_m\}$,
\begin{align*}d_{J_1}(\mathcal{J}^{(n)},\mathcal{J}) &\le \left(\sum_{k=1}^\infty 2^{-k} \sup_{s\le c_k}\,d_\mathcal{E}(J^{(n)}(\Delta_n(s)),J(s))\right)\vee \sup_{s\ge 0}|\Delta_n(s)-s|\\
& \le \left(\sup_{s\le c_m}\,d_\mathcal{E}(J^{(n)}(\Delta_n(s)),J(s)) + 2^{-m}\right) + \sup_{s\ge 0}|\Delta_n(s)-s|. \end{align*}
Taking $m=n$ for large enough $n$, we get
\begin{align*}
\left\{ {\red d_{J_1}(\mathcal{J}^{(n)}, \mathcal{J})}\ge 3\kappa(q,\varepsilon)(\log n)n^{-1/2+{  \varepsilon/2}}\right\}&\subseteq A^{(n)}_1\cup A^{(n)}_2\cup A^{(n)}_3
\end{align*}
where
\begin{align*}
A^{(n)}_1 & = \left\{ \sup_{s\le c_n}\,d_\mathcal{E}(J^{(n)}(\Delta_n(s)),J(s))\ge \kappa(q,\varepsilon)(\log n)n^{-1/2+{  \varepsilon/2}}\right\},\\
A^{(n)}_2 & = \left\{2^{-n}\ge \kappa(q,\varepsilon)(\log n)n^{-1/2+{  \varepsilon/2}}\right\},\\
A^{(n)}_3 & = \left\{\sup_{s\ge 0}|\Delta_n(s)-s|\ge \kappa(q,\varepsilon)(\log n)n^{-1/2+{  \varepsilon/2}}\right\}.
\end{align*}
The event $A^{(n)}_1$ is contained in
\begin{equation}\label{eq:eventsaux1}\left\{J^{(n)}(\Delta_n(s))\neq J(s)\mbox{ for some }s<\chi^{(n)}_{\lfloor n^{1+\varepsilon}\rfloor}\right\}\cup\left\{\chi^{(n)}_{\lfloor n^{1+\varepsilon}\rfloor}\le c_n\right\};\end{equation}
the first set in (\ref{eq:eventsaux1}) is $\mathbb{P}$-null by construction, while the second set is contained in $\{\max_{\ell\in\{1,\dots, \lfloor n^{1+\varepsilon}\rfloor\}}\left|\theta_\ell^{(n)}-\ell/n\right|\ge \beta_n\}$ which according to (\ref{eq:rateaux2}) has an $o(n^{-q})$ probability of occuring. Finally, the set $A^{(n)}_2$ is clearly null for large enough $n$, and $A^{(n)}_3$ has an $o(n^{-q})$ probability in view of (\ref{eq:convaux2}) and Lemma \ref{lem:ratestrong1}. This proves (\ref{eq:ratemainsquared}), with the strong convergence result following from standard Borel-Cantelli arguments.
$\hfill\square$\end{proof}}

{  Not surprisingly, the strongly convergent result in Theorem \ref{th:strongmain1} yields a distributional property of the approximation $\mathcal{J}^{(n)}$: its associated probability law over $\mathcal{D}(\mathbb{R}_+,\mathcal{E}_\star)$ converges to that of $\mathcal{J}$ as $n\rightarrow\infty$. More specifically, for all $\nu > 0$ and subset $A$ of the Polish space $\mathcal{D}(\mathbb{R}_+,\mathcal{E}_\star)$, define the $\nu$-neighbourhood of $A$ by
\[A^{\nu}:=\left\{y\in \mathcal{D}(\mathbb{R}_+,\mathcal{E}_\star) \,:\, \exists z\in A, d_{J_1}(y,z)<\nu\right\}.\]
Then, the Prokhorov metric between two probability measures $\mu_1,\mu_2$ over $\mathcal{D}(\mathbb{R}_+,\mathcal{E}_\star)$ (endowed with its Borel sets, $\mathbb{B}_\mathcal{D}$) is defined by
\[L(\mu_1,\mu_2):=\inf\{\nu > 0\, :\, \mu_1(A)\le \mu_2(A^\nu) + \nu\mbox{ and }\mu_2(A)\le \mu_1(A^\nu) + \nu\mbox{ for all }A\in \mathbb{B}_\mathcal{D}\}.\]
Employing \cite[Lemma 1.2]{prokhorov1956convergence} in a spirit similar to \cite{fraser1973rate}, we readily get the following rate of distributional convergence as a sraightforward consequence of Theorem \ref{th:strongmain1}.
\begin{corollary}\label{cor:prok1} Let $\mu: \mathbb{B}_\mathcal{D}\mapsto[0,1]$ and $\mu^{(n)}:\mathbb{B}_\mathcal{D}\mapsto[0,1]$ be the probability laws associated to $\mathcal{J}$ and $\mathcal{J}^{(n)}$, respectively. Then,
\[L(\mu^{(n)},\mu)=o((\log n)n^{-1/2+{  \varepsilon/2}}).\]
\end{corollary}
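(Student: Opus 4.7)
The plan is to combine the high-probability rate from Theorem \ref{th:strongmain1} with Prokhorov's \cite[Lemma 1.2]{prokhorov1956convergence}, which asserts that if $X$ and $Y$ are random elements of a Polish space defined on a common probability space and $\mathbb{P}(d(X,Y) > \eta) \le \delta$, then the Prokhorov distance between $\mathrm{law}(X)$ and $\mathrm{law}(Y)$ is at most $\max(\eta,\delta)$. The starting observation is that, by the construction in Step (\ref{it:strong1}) of Section \ref{sec:strongMJP}, both $\mathcal{J}^{(n)}$ and $\mathcal{J}$ live on the common probability space $(\Omega,\mathcal{F},\mathbb{P})$ and take values in the Polish space $(\mathcal{D}(\mathbb{R}_+,\mathcal{E}_\star),d_{J_1})$, so Prokhorov's lemma applies in our setting.

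To obtain the stated $o$-type rate, I would fix an auxiliary parameter $\varepsilon' \in (0,\varepsilon)$ and apply Theorem \ref{th:strongmain1} with exponent $\varepsilon'$ and any $q \ge 1$. Setting $\eta_n := 3\kappa(q,\varepsilon')(\log n)\,n^{-1/2+\varepsilon'/2}$ and letting $\delta_n = o(n^{-q})$ denote the right-hand side of (\ref{eq:ratemainsquared}), the theorem gives $\mathbb{P}(d_{J_1}(\mathcal{J}^{(n)},\mathcal{J}) \ge \eta_n) \le \delta_n$. Since $\delta_n$ decays faster than any polynomial (for $q \ge 1$) while $\eta_n$ decays only like $n^{-1/2+\varepsilon'/2}$ up to logarithms, we have $\delta_n \le \eta_n$ for all sufficiently large $n$, hence by Prokhorov's lemma $L(\mu^{(n)},\mu) \le \max(\eta_n,\delta_n) = \eta_n$.

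Finally, since $\varepsilon' < \varepsilon$, the bound $\eta_n = O((\log n)\,n^{-1/2+\varepsilon'/2})$ is itself $o((\log n)\,n^{-1/2+\varepsilon/2})$, yielding the corollary. I do not anticipate any real obstacle: the only mildly delicate point is converting the $O$-type bound at $\varepsilon'$ into the $o$-type bound at $\varepsilon$, which is handled by the arbitrariness of $\varepsilon' \in (0,\varepsilon)$; aside from that, the argument is essentially a two-line invocation of Prokhorov's inequality on top of Theorem \ref{th:strongmain1}.
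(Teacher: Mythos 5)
Your proposal is correct and follows essentially the same route as the paper, which likewise obtains the corollary by applying Prokhorov's coupling bound \cite[Lemma 1.2]{prokhorov1956convergence} directly to the high-probability rate of Theorem \ref{th:strongmain1}. If anything, you are more careful than the paper's one-line derivation, since your passage through an auxiliary $\varepsilon'\in(0,\varepsilon)$ is exactly what is needed to upgrade the $O\bigl((\log n)n^{-1/2+\varepsilon/2}\bigr)$ bound one gets at exponent $\varepsilon$ to the stated $o\bigl((\log n)n^{-1/2+\varepsilon/2}\bigr)$.
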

}

In summary, Theorems \ref{th:approximation1} and \ref{th:strongmain1} yield a way to study any arbitrary finite-state time-inhomogeneous Markov jump process, under the transparent Condition 1, by means of {   conditional} time-homogeneous Markov jump processes, the latter having an infinite-dimensional block structure. {  Moreover, Corollary \ref{cor:prok1} provides a robust setup to quantify the distributional convergence of $\mathcal{J}^{(n)}$ to $\mathcal{J}$ not only from a pointwise perspective but from a functional one. To the best of our knowledge, this is the first attempt in the literature that provides a functional approach to quantify an approximative uniformization algorithm. Related methods such as those in \cite{helton1976numerical,van1992uniformization,arns2010numerical} work under certain continuity assumptions on $\bm{\Lambda}(\cdot)$ and focus on transition probabilities. In contrast, the present treatment examines the conditional case under fairly general conditions (measurability and boundedness) and provides stronger pathwise results. Next, we study the unconditional case, for which we will require similar regularity conditions to those found in the aforementioned approximative uniformization algorithms.}

{  
\subsection{From conditional to unconditional convergence} \label{sec:uncond1}
The above strongly convergent conditional representations suggest using Monte Carlo simulation methods to obtain accurate unconditional distributional properties of the main approximations. More specifically: 
\begin{enumerate}
\item[(A)] sample the Poisson process $\big\{\chi_{\ell}^{(n)}\big\}_{\ell\ge 0}$, 
\item[(B)] apply closed form formulae to descriptors of the conditional time-homogeneous Markov jump process $\big\{\big(L^{(n)}(t), J^{(n)}(t)\big)\big\}_{t\ge 0}$, 
\item[(C)] repeat steps (A) and (B) a predetermined amount of times and average the resulting descriptors.
\end{enumerate}
While this is procedure is  inexpensive, we suggest using a nonrandom modification of the approximating distributional characteristics. Under certain regularity conditions, we now show that these modifications also converge weakly to the target law. Namely, we require the following \textit{Lipschitz continuity} assumption on the intensity matrix function $\bm{\Lambda}(\cdot)$: there exists some $K_L>0$ such that for all $x,y\in\mathbb{R}$ we have
\begin{equation}\label{lip_cond}\Vert \bm{\Lambda}(x)-\bm{\Lambda}(y)\Vert \le K_L|x-y|,\end{equation}
where $\Vert \cdot \Vert$ in the l.h.s. of \eqref{lip_cond} denotes the supremum norm of matrices.

We define the following (nonrandom) modifications for the components of the transition matrix:
\begin{align}\label{nonrandom_Qs}
\widehat{\bm{Q}}^{(n)}_\ell=\bm{I}+\frac 1n \left[ \bm{\Lambda}(\ell/n) \right],\quad \widetilde{\bm{Q}}^{(n)}_\ell=\bm{I}+\frac 1n \E\left[ \bm{\Lambda}(\chi^{(n)}_\ell) \right],
\end{align}
and denote their corresponding transition matrices by $\widehat{\bm{P}}^{(n)}$ and $\widetilde{\bm{P}}^{(n)}$ where for $0\le s\le t$,
\begin{align}
\widehat{\bm{P}}^{(n)}(s,t)&=\sum_{\ell=0}^\infty \sum_{k=0}^\infty \mathrm{Poi}_{ns}(k)\times \mathrm{Poi}_{n(t-s)}(\ell)\times \left[\widehat{\bm{Q}}^{(n)}_{k+1}\widehat{\bm{Q}}^{(n)}_{k+2}\cdots\widehat{\bm{Q}}^{(n)}_{k+\ell}\right],\label{eq:Phat7}\\
\widetilde{\bm{P}}^{(n)}(s,t)&=\sum_{\ell=0}^\infty \sum_{k=0}^\infty \mathrm{Poi}_{ns}(k)\times \mathrm{Poi}_{n(t-s)}(\ell)\times \left[\widetilde{\bm{Q}}^{(n)}_{k+1}\widetilde{\bm{Q}}^{(n)}_{k+2}\cdots\widetilde{\bm{Q}}^{(n)}_{k+\ell}\right].\label{eq:Ptilde7}
\end{align}
Note that while ${\bm{P}}^{(n)}$ conditional on $\big\{\chi_{\ell}^{(n)}\big\}_{\ell\ge 0}$ is random, the transition probabilities $\widehat{\bm{P}}^{(n)}$ and $\widetilde{\bm{P}}^{(n)}$ are deterministic.

\begin{theorem}\label{gen_convs}
For any $q>1$ and $\varepsilon>0$, there exists a constant $\kappa_L(q,\varepsilon)$ such that for all $T\ge 0$
\begin{align}
\mathbb{P}\left(\sup_{0\le s<t\le T}\Vert \bm{P}^{(n)}(s,t)-\widehat{\bm{P}}^{(n)}(s,t)\Vert\ge (T+1) \kappa_L(q,\varepsilon)(\log n)n^{-1/2+\varepsilon}\right)=o(n^{-q}),\label{eq:ratePhat}\\ 
\mathbb{P}\left(\sup_{0\le s<t\le T}\Vert \bm{P}^{(n)}(s,t)-\widetilde{\bm{P}}^{(n)}(s,t)\Vert\ge (T+1) \kappa_L(q,\varepsilon)(\log n)n^{-1/2+\varepsilon}\right)=o(n^{-q}).\label{eq:ratePtilde}\end{align}
\end{theorem}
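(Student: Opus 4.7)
The plan is to exploit the common double Poisson--mixture form of $\bm{P}^{(n)}(s,t)$, $\widehat{\bm{P}}^{(n)}(s,t)$, and $\widetilde{\bm{P}}^{(n)}(s,t)$ in \eqref{eq:ftaun}, \eqref{eq:Phat7}, and \eqref{eq:Ptilde7}: subtracting termwise reduces both target inequalities to controlling, for each $(k,\ell)$, the difference of matrix products $\bm{Q}^{(n)}_{k+1}\cdots \bm{Q}^{(n)}_{k+\ell}$ against $\widehat{\bm{Q}}^{(n)}_{k+1}\cdots \widehat{\bm{Q}}^{(n)}_{k+\ell}$, and analogously for $\widetilde{\bm{Q}}^{(n)}$. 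Since for $n\ge \lambda_0$ each of these matrices has non-negative entries and row sums bounded by $1$, the standard telescoping identity yields, in the row-sum operator norm,
\[
\Bigl\Vert \prod_{j=1}^{\ell} \bm{Q}^{(n)}_{k+j} - \prod_{j=1}^{\ell} \widehat{\bm{Q}}^{(n)}_{k+j}\Bigr\Vert \le \sum_{j=1}^{\ell}\bigl\Vert \bm{Q}^{(n)}_{k+j} - \widehat{\bm{Q}}^{(n)}_{k+j}\bigr\Vert,
\]
and the same with $\widetilde{\bm{Q}}^{(n)}$ on the right. By \eqref{lip_cond}, $\Vert \bm{Q}^{(n)}_j - \widehat{\bm{Q}}^{(n)}_j\Vert \le (K_L/n)\bigl\vert \chi_j^{(n)} - j/n\bigr\vert$. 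For the centred case one adds and subtracts $\bm{\Lambda}(j/n)$ and uses Jensen to bound $\Vert \bm{\Lambda}(j/n) - \E[\bm{\Lambda}(\chi_j^{(n)})]\Vert \le K_L\,\E\bigl\vert \chi_j^{(n)} - j/n\bigr\vert \le K_L \sqrt{j}/n$; for $j\le \lfloor n^{1+\varepsilon}\rfloor$ this deterministic term is dominated by the random fluctuation estimate from Lemma~\ref{lem:ratestrong1} and can be absorbed into the same high-probability bound.

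Next, I would run a joint truncation-plus-concentration argument. Let $N = \lfloor n^{1+\varepsilon}\rfloor$ and define $A_n = \{\max_{j\le N}\vert \chi_j^{(n)} - j/n\vert \le \beta_n\}$ with $\beta_n = \kappa(q,\varepsilon)(\log n) n^{-1/2+\varepsilon/2}$; equation \eqref{eq:rateaux2} inside the proof of Lemma~\ref{lem:ratestrong1} gives $\mathbb{P}(A_n^c) = o(n^{-q})$. Simultaneously, the tail of the double Poisson is deterministic and uniform in $(s,t)\in[0,T]^2$:
\[
\sum_{k+\ell > N}\mathrm{Poi}_{ns}(k)\mathrm{Poi}_{n(t-s)}(\ell) = \mathbb{P}\bigl(\mathrm{Poi}(nt) > N\bigr) \le \mathbb{P}\bigl(\mathrm{Poi}(nT) > N\bigr),
\]
which Chernoff's bound renders superpolynomially small in $n$, as $N\gg enT$ for $n$ large. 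Because every factor appearing in any of the three matrix products has row-sum norm at most $1$, this tail contributes at most $2\,\mathbb{P}(\mathrm{Poi}(nT)>N)=o(n^{-q})$ deterministically to $\sup_{(s,t)\in[0,T]^2}\Vert \bm{P}^{(n)}(s,t)-\widehat{\bm{P}}^{(n)}(s,t)\Vert$ and to the analogous supremum for $\widetilde{\bm{P}}^{(n)}$.

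On $A_n$ and within the truncation, each $\Vert \bm{Q}^{(n)}_{k+j}-\widehat{\bm{Q}}^{(n)}_{k+j}\Vert$ (respectively, the centred analogue) is bounded by $C K_L \beta_n/n$ uniformly in $j$ for some $C\in\{1,2\}$, so the telescoped double-Poisson sum is bounded by
\[
\frac{C K_L \beta_n}{n}\sum_{k,\ell\ge 0} \mathrm{Poi}_{ns}(k)\mathrm{Poi}_{n(t-s)}(\ell)\cdot \ell = C K_L \beta_n (t-s)\le C K_L \beta_n T.
\]
Combining this with the deterministic Poisson-tail contribution yields, on $A_n$ and for all $n$ large,
\[
\sup_{0\le s<t\le T}\bigl\Vert \bm{P}^{(n)}(s,t)-\widehat{\bm{P}}^{(n)}(s,t)\bigr\Vert \le (T+1)\,\kappa_L(q,\varepsilon)(\log n) n^{-1/2+\varepsilon},
\]
where $\kappa_L(q,\varepsilon)$ absorbs $C$, $K_L$, $\kappa(q,\varepsilon)$, and the passage from $n^{-1/2+\varepsilon/2}$ to $n^{-1/2+\varepsilon}$; the same bound holds for $\widetilde{\bm{P}}^{(n)}$. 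Since $\mathbb{P}(A_n^c)=o(n^{-q})$, this delivers both \eqref{eq:ratePhat} and \eqref{eq:ratePtilde}.

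The main obstacle is purely bookkeeping: aligning the truncation level $N$ with the range in which Lemma~\ref{lem:ratestrong1} provides uniform control, ensuring the Poisson tail is dominated uniformly in $(s,t)\in[0,T]^2$ rather than pointwise, and, in the $\widetilde{\bm{P}}^{(n)}$-case, verifying that the deterministic gap $\bm{\Lambda}(j/n)-\E[\bm{\Lambda}(\chi_j^{(n)})]$ does not dominate the random fluctuation $\bm{\Lambda}(\chi_j^{(n)})-\bm{\Lambda}(j/n)$. All three are handled by a single application of Lemma~\ref{lem:ratestrong1} together with a Chernoff tail bound for the Poisson grid.
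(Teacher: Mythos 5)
Your proposal is correct and follows essentially the same route as the paper's proof: the same telescoping bound on the difference of $\bm{Q}$-products, the same Lipschitz estimate $\Vert \bm{Q}^{(n)}_j-\widehat{\bm{Q}}^{(n)}_j\Vert\le (K_L/n)|\chi^{(n)}_j-j/n|$ controlled on the high-probability event from Lemma~\ref{lem:ratestrong1}, the same $\sum_\ell \ell\,\mathrm{Poi}_{n(t-s)}(\ell)=n(t-s)$ computation, and the same Cauchy--Schwarz/Erlang-variance bound $\sqrt{j}/n$ for the centred case. The only cosmetic deviations are your use of a Chernoff bound for the Poisson tail where the paper reuses \eqref{eq:rateaux2}, and your choice of the row-sum operator norm (equivalent to the paper's supremum norm up to a factor of $p$).
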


\begin{proof}\textbf{Proof.} Let $B^{(n)}=\left\{\max_{\ell\in\{1,\dots, \lfloor n^{1+\varepsilon}\rfloor\}}\left|\theta_\ell^{(n)}-\chi_\ell^{(n)}\right|< \kappa(q,\varepsilon)(\log n)n^{-1/2+\varepsilon}\right\}$; by Lemma \ref{lem:ratestrong1}, $\P(\Omega\setminus B^{(n)})= o(n^{-q})$, so that we can focus our attention on events that occur under $B^{(n)}$ only. Employing the triangle inequality of $\Vert\cdot\Vert$, for $0\le s<t\le T$ 

\begin{align}
\Vert \bm{P}^{(n)}(s,t)-\widehat{\bm{P}}^{(n)}(s,t)& \Vert\le  \sum_{\substack{k,\ell \ge 0\\ k+\ell \le \lfloor n^{1+\varepsilon}\rfloor }} \mathrm{Poi}_{ns}(k)\times \mathrm{Poi}_{n(t-s)}(\ell)\times\left\Vert\prod_{m=1}^{\ell} \bm{Q}^{(n)}_{k+m} - \prod_{m=1}^{\ell} \widehat{\bm{Q}}^{(n)}_{k+m} \right\Vert\nonumber\\
&\quad + \sum_{\substack{k,\ell \ge 0\\ k+\ell > \lfloor n^{1+\varepsilon}\rfloor }} \mathrm{Poi}_{ns}(k)\times \mathrm{Poi}_{n(t-s)}(\ell).\label{eq:triangle1}
\end{align}
Further,
\begin{align*}
\left\Vert \prod_{m=1}^{\ell} \bm{Q}^{(n)}_{k+m} - \prod_{m=1}^{\ell} \widehat{\bm{Q}}^{(n)}_{k+m} \right\Vert&=\left\Vert
\bm{Q}^{(n)}_{k+1}\left(\prod_{m=2}^{\ell} \bm{Q}^{(n)}_{k+m} - \prod_{m=2}^{\ell} \widehat{\bm{Q}}^{(n)}_{k+m} \right)
-\left(\widehat{\bm{Q}}^{(n)}_{k+1}-\bm{Q}^{(n)}_{k+1} \right)\prod_{m=2}^{\ell}\widehat{\bm{Q}}^{(n)}_{k+m}
\right\Vert\\
&\le 
\left\Vert
\prod_{m=2}^{\ell} \bm{Q}^{(n)}_{k+m} - \prod_{m=2}^{\ell} \widehat{\bm{Q}}^{(n)}_{k+m}
\right\Vert
+
\left\Vert
\widehat{\bm{Q}}^{(n)}_{k+1}-\bm{Q}^{(n)}_{k+1} 
\right\Vert\\
&\le \cdots \le \ell \max_{m=1,\dots,\ell}\left\Vert
\widehat{\bm{Q}}^{(n)}_{k+m}-\bm{Q}^{(n)}_{k+m}
\right\Vert
\end{align*}
By the Lipschitz condition \ref{lip_cond}, on the event $B^{(n)}$ we obtain that for all $k,m\ge 0$ with $k+m\le \lfloor n^{1+\varepsilon}\rfloor$,
\begin{align*}
\left\Vert
\widehat{\bm{Q}}^{(n)}_{k+m}-\bm{Q}^{(n)}_{k+m} 
\right\Vert \le\frac{K_L}{n}\left|\chi^{(n)}_k-\frac{k}{n}\right| \le K_L \kappa(q,\varepsilon)(\log n)n^{-3/2+\varepsilon}.
\end{align*}
Then,
\begin{align*}
& \sum_{\substack{k,\ell \ge 0\\ k+\ell \le \lfloor n^{1+\varepsilon}\rfloor }} \mathrm{Poi}_{ns}(k)\times \mathrm{Poi}_{n(t-s)}(\ell)\times\left\Vert\prod_{m=1}^{\ell} \bm{Q}^{(n)}_{k+m} - \prod_{m=1}^{\ell} \widehat{\bm{Q}}^{(n)}_{k+m} \right\Vert\\
&\quad\le \sum_{k,\ell \ge 0} \mathrm{Poi}_{ns}(k)\times \mathrm{Poi}_{n(t-s)}(\ell)\times \ell K_L \kappa(q,\varepsilon)(\log n)n^{-3/2+\varepsilon}\\
&\quad= K_L \kappa(q,\varepsilon)(\log n)n^{-3/2+\varepsilon} \sum_{\ell\ge0}  \ell\,\mathrm{Poi}_{n(t-s)}(\ell) = (t-s) K_L \kappa(q,\varepsilon)(\log n)n^{-1/2+\varepsilon}.
\end{align*}
Note that 
\begin{align*}
\sum_{\substack{k,\ell \ge 0, k+\ell > \lfloor n^{1+\varepsilon}\rfloor }} \mathrm{Poi}_{ns}(k)\times \mathrm{Poi}_{n(t-s)}(\ell)= \sum_{\ell > \lfloor n^{1+\varepsilon}\rfloor } \mathrm{Poi}_{nt}(\ell) = \P (\chi^{(n)}_{\lfloor n^{1+\varepsilon}\rfloor} \le t )\le \P (\chi^{(n)}_{\lfloor n^{1+\varepsilon}\rfloor} \le T ),
\end{align*} where the last expression can be verified to be an $o(n^{-1})$ function by virtue of (\ref{eq:rateaux2}). Using the triangle inequality and taking supremums, we get that on $B^{(n)}$
\begin{align*}
\sup_{0\le s<t\le T}\Vert \bm{P}^{(n)}(s,t)-\widehat{\bm{P}}^{(n)}(s,t)\Vert &\le T K_L \kappa(q,\varepsilon)(\log n)n^{-1/2+\varepsilon} + \P (\chi^{(n)}_{\lfloor n^{1+\varepsilon}\rfloor} \le T )\\
& \le (T+1) K_L \kappa(q,\varepsilon)(\log n)n^{-1/2+\varepsilon},
\end{align*}
where the last equality holds for large enough $n$ such that $\tfrac{\P (\chi^{(n)}_{\lfloor n^{1+\varepsilon}\rfloor} \le T )}{  T K_L \kappa(q,\varepsilon)(\log n)n^{-1/2+\varepsilon}}\le 1$. We thus conclude that \eqref{eq:ratePhat} holds with $\kappa_L(q,\varepsilon):=2 K_L \kappa(q,\varepsilon)$. Equation \eqref{eq:ratePtilde} follows in a similar fashion by noting that on $B^{(n)}$ and for all $k,m\ge 0$ with $k+m\le \lfloor n^{1+\varepsilon}\rfloor$,
\begin{align*}
\left\Vert
\widetilde{\bm{Q}}^{(n)}_{k+m}-\bm{Q}^{(n)}_{k+m} 
\right\Vert & \le \frac{K_L}{n} \left(\left| \chi^{(n)}_{k+m}- \frac{k+m}{n} \right| + \E\left(\left| \chi^{(n)}_{k+m}- \frac{k+m}{n} \right|\right)\right)\\
& \le \frac{K_L}{n} \left(\left| \chi^{(n)}_{k+m}- \frac{k+m}{n} \right| + \sqrt{\E\left(\left| \chi^{(n)}_{k+m}- \frac{k+m}{n} \right|^2\right)}\right)\\
& \le \frac{K_L}{n} \left(\kappa(q,\varepsilon)(\log n)n^{-1/2+\varepsilon} + \frac{\sqrt{k+m}}{n}\right)\le 2 K_L \kappa(q,\varepsilon)(\log n)n^{-3/2+\varepsilon};
\end{align*}
{\red please note that in the third inequality we employed the standard deviation formula for $\chi^{(n)}_{k+m}$.}
$\hfill\square$\end{proof}
}
{\red

\begin{remark}\label{rem:ua1}
Formulas \eqref{eq:Phat7} and \eqref{eq:Ptilde7} are intrinsically connected to the uniformization method which is also used to analyze the uniform acceleration of time-inhomogeneous Markov jump processes in \cite{massey1998uniform}. Specifically, \eqref{eq:Phat7} and \eqref{eq:Ptilde7} can be compared with  \cite[Eq. (3.11)]{massey1998uniform}, although two significant aspects distinguish them from our framework. First, the authors in the mentioned work focus on studying a time-inhomogeneous Markov jump process that is inspected at a dense set of Poisson observations, while its associated generator is proportionally scaled in a simultaneous manner. This construction creates a process that is distinct from the original one, in fact, an accelerated version of it. The distinction is confirmed when noting that their uniform acceleration approximations yield a discrete Markov chain with transition probabilities that remain constant with respect to the scaling factor (cf. Corollary 3.2 in \cite{massey1998uniform}). In contrast, within our framework, the associated discrete Markov chains possess transition probabilities $\widehat{\bm{Q}}^{(n)}_\ell$ and $\widetilde{\bm{Q}}^{(n)}_\ell$ that depend on $n$, a consequence of keeping the infinitesimal generator fixed as opposed to accelerating it. Second, in \cite[Eq. (3.11)]{massey1998uniform}, the authors address multiple integrals of transition probabilities. Our method sidesteps this by considering random matrices ${\bm{Q}}^{(n)}_\ell$ in the conditional case or the matrices $\widehat{\bm{Q}}^{(n)}_\ell$ and $\widetilde{\bm{Q}}^{(n)}_\ell$ in the unconditional case.

We believe that accelerating our generator matrices could provide an interesting and alternative construction which could form the basis for new strong approximations of inhomogeneous Markov jump-processes; this is delegated to future research.
\end{remark}
}

\section{Applications to inhomogeneous phase-type distributions}\label{sec:IPH}

Let $\mathcal{J}$ be a transient time-inhomogeneous Markov jump process defined on the state-space $\{1,\dots, p\}$ driven by an subintensity matrix function $\bm{S}(t)$. As pointed out in Remark \ref{rem:star1}, $\mathcal{J}$ can be considered instead as a non-terminating time-inhomogeneous Markov jump process on the state-space $\{1,\dots,p\}\cup\{\star\}$ driven by the $(p+1)\times (p+1)$ intensity matrix
\begin{align*}
	\left( \begin{array}{cc}
		\bfS(t) &  \bfs(t) \\
		\0 & 0
	\end{array} \right),\quad t\ge 0,
\end{align*}
where $\bfs(t)=- \bfS(t) \, \bfe$ is a $p$--dimensional column vector providing the exit rates from each state directly to $\star$ (here $\bfe $ is a $p$--dimensional column vector of ones). Note that in this framework, termination of $\mathcal{J}$ is equivalent to its absoption to $\star$; without loss of generality, we will use these two notions interchangeably.


\begin{definition}\label{def_iph}
Define the absorption time of the time-inhomogeneous Markov jump process as
\begin{align*}
	\tau = \inf \{ t >  0 : J(t) = \star \}.
\end{align*}
Then we say that $\tau$ follows an \emph{inhomogeneous phase--type distribution} with representation $(\vect{\alpha},(\bfS(t))_{t\ge0})$, or simply $\tau \sim \mbox{IPH}(\vect{\alpha},\bfS(t))$.
\end{definition}
The class of inhomogeneous phase-type (IPH) distributions was defined in \cite{albrecher2019inhomogeneous} as a generalization of the widely used class of phase-type (PH) distributions \cite{neuts1975probability}, for which the underlying Markov jump process $\mathcal{J}$ is allowed to be time-homogeneous only. Both PH and IPH distributions are dense on the set of all distributions with positive support \cite[Section 3.2.1]{bladt2017matrix}. However, the IPH class can target any tail behaviour, while PH distributions are only suitable for random phenomena whose tail distribution decays at some exponential rate. This generality of IPH over PH distributions comes at a cost: while the PH class has been shown to be the archetype distribution for which many descriptors are available in explicit matrix-form (see e.g. \cite{bladt2017matrix}), the  formulae currently available for IPH distributions are considerably more scarce. While this scarcity is expected to be less severe as the systematic study of IPH distributions in the literature matures, the complexity associated to product integrals (when compared with the matrix-exponential function) will likely not allow for the IPH theory to be as rich as the PH one. Here we aim to amend this by employing Theorem \ref{th:strongmain1} to provide approximations to any IPH distribution via \emph{infinite-dimensional phase-type distributions} (\cite{shi2005sph}), the latter being a generalization of the PH class which allows for the underlying state-space to have countably-infinite cardinality.

\begin{theorem}[Absorption time convergence]\label{abs_time_theo}
Let $\tau \sim \mbox{IPH}(\vect{\alpha},\bfS(t))$ where the entries of $\bfS(t)$ are uniformly bounded. Then there exists a sequence of random variables $\{\tau^{(n)}\}_{n\ge \lambda_0}$ such that $\tau^{(n)}\rightarrow \tau$ in probability as $n\rightarrow\infty$, where each $\tau^{(n)}$ {  conditionally on $\mathcal{N}^{(n)}$} follows an infinite-dimensional phase-type distribution with initial distribution $\vect{\alpha}^{(n)}=(\vect{\alpha},\bm{0},\bm{0},\dots)$ and subintensity matrix function
\begin{equation}\label{eq:iph_struct}
{\mat{S}}^{(n)}=\begin{pmatrix}-n\bm{I}&n\bm{Q}^{(n)}_1& &&\\
&-n\bm{I}&n\bm{Q}^{(n)}_2& &\\
&&-n\bm{I}&n\bm{Q}^{(n)}_3&\\
&&&\ddots&\ddots
\end{pmatrix},\end{equation}
where
\begin{equation}\label{eq:Qnell1_IPH}
{  \bm{Q}_\ell^{(n)}= \bm{I}+\frac 1n  \bm{S}(\chi^{(n)}_\ell)}.\end{equation}

Furthermore, the rate of convergence of $\tau^{(n)}$ to $\tau$ is given by
\begin{equation}\label{eq:rate_convergence_PH}\mathbb{P}\left(|\tau-\tau^{(n)}|>\kappa(q,\varepsilon)(\log n)n^{-1/2+{  \varepsilon/2}}\right)= o(n^{-q}) + \mathbb{P}(\tau> (1-\varepsilon)n^{\varepsilon})\quad\mbox{for all}\quad\varepsilon>0\end{equation}
as $n\to \infty$.
\end{theorem}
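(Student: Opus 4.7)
The plan is to take $\tau^{(n)} := \inf\{t > 0 : J^{(n)}(t) = \star\}$ as the natural candidate, i.e., the first time the approximating process reaches the cemetery state $\star$ (Remark \ref{rem:star1}), and then establish the two claims sequentially. The infinite-dimensional phase-type representation falls out immediately from Theorem \ref{th:approximation1}(b): conditionally on $\mathcal{N}^{(n)}$, the bivariate process $(L^{(n)}, J^{(n)})$ is a time-homogeneous Markov jump process on $\mathbb{N}_0 \times \mathcal{E}_\star$ whose sub-generator on the non-absorbing block is exactly (\ref{eq:iph_struct}), with $\bm{Q}^{(n)}_\ell$ as in (\ref{eq:Qnell1_IPH}) and initial distribution $\bm{\alpha}^{(n)}$. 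Reading off the absorption time of this (conditionally) homogeneous process at $\star$ yields the claimed representation.

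For the rate estimate (\ref{eq:rate_convergence_PH}), I would exploit the exact pathwise identification afforded by the uniformization construction rather than appeal abstractly to the $J_1$ bound of Theorem \ref{th:strongmain1}. Define $\ell^* := \min\{\ell \geq 0 : J(\chi^{(n)}_\ell) = \star\}$. Because $\mathcal{N} \subseteq \mathcal{N}^{(n)}$ and the uniformized construction places the termination of $\mathcal{J}$ at an arrival of $\mathcal{N}$, one obtains $\tau = \chi^{(n)}_{\ell^*}$ on $\{\tau < \infty\}$, while definition (\ref{eq:defJnt}) delivers $\tau^{(n)} = \theta^{(n)}_{\ell^*}$ by construction. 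On the event $\{\ell^* \leq \lfloor n^{1+\varepsilon}\rfloor\}$ this gives the clean identity
\[|\tau - \tau^{(n)}| = |\chi^{(n)}_{\ell^*} - \theta^{(n)}_{\ell^*}| \leq \max_{\ell \in \{1, \ldots, \lfloor n^{1+\varepsilon}\rfloor\}} |\chi^{(n)}_\ell - \theta^{(n)}_\ell|,\]
and Lemma \ref{lem:ratestrong1} dispatches the right-hand side with probability $1 - o(n^{-q})$.

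To close the argument I would control $\mathbb{P}(\ell^* > \lfloor n^{1+\varepsilon}\rfloor)$ by first bounding it by $\mathbb{P}(\tau > \chi^{(n)}_{\lfloor n^{1+\varepsilon}\rfloor})$ and then invoking (\ref{eq:rateaux2}) to replace $\chi^{(n)}_{\lfloor n^{1+\varepsilon}\rfloor}$ by $(1-\varepsilon)n^\varepsilon$ modulo an $o(n^{-q})$ event, producing the second summand in (\ref{eq:rate_convergence_PH}). Convergence in probability then follows by fixing $\delta > 0$ and $\varepsilon \in (0,1)$ and letting $n \to \infty$, so that $\kappa(q,\varepsilon)(\log n)n^{-1/2 + \varepsilon/2} < \delta$ eventually while both error terms vanish. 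The hardest part, in my view, is the pathwise identification $\tau = \chi^{(n)}_{\ell^*}$ and $\tau^{(n)} = \theta^{(n)}_{\ell^*}$: it requires revisiting the uniformization construction of Section \ref{sec:strongMJP} to confirm that termination of $\mathcal{J}$ can only occur at an arrival epoch of $\mathcal{N}$, which is automatically an inspection epoch of $\mathcal{N}^{(n)}$. Once this alignment is in place, the rate reduces to a combination of Lemma \ref{lem:ratestrong1} and a routine truncation bound on the Poisson grid.
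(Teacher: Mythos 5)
Your proposal is correct and follows essentially the same route as the paper: the same pathwise identification $\tau = \chi^{(n)}_{\gamma_n}$, $\tau^{(n)} = \theta^{(n)}_{\gamma_n}$ (your $\ell^*$ is the paper's $\gamma_n$), the same split on $\{\gamma_n \le \lfloor n^{1+\varepsilon}\rfloor\}$ handled by Lemma \ref{lem:ratestrong1}, and the same truncation bound converting $\mathbb{P}(\tau > \chi^{(n)}_{\lfloor n^{1+\varepsilon}\rfloor})$ into $\mathbb{P}(\tau > (1-\varepsilon)n^{\varepsilon}) + o(n^{-q})$ via \eqref{eq:rateaux2}. The representation claim via Theorem \ref{th:approximation1}(b) is likewise how the paper obtains the subintensity structure \eqref{eq:iph_struct}.
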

\begin{proof}\textbf{Proof.}
Let us borrow the probability space $(\Omega,\mathcal{F},\mathbb{P})$ of Section \ref{sec:strongMJP}, along with the random variables and stochastic processes defined there. For $\tau=\inf\{ t>0 : J(t)=\star\}$ and $\tau^{(n)}=\inf\{ t>0 : J^{(n)}(t)=\star\}$, we will prove that (\ref{eq:rate_convergence_PH}) holds.

Fix $n\ge \lambda_0$. Recall that by construction, all the jumps of $\mathcal{J}$ can only occur at the random time grid $\{\chi^{(n)}_\ell\}_{\ell\ge 0}$, meaning that there exists a (random) unique $\gamma_n\ge 1$ such that $\tau=\chi^{(n)}_{\gamma_n}$. {  Similarly, since the jumps of $\mathcal{J}^{(n)}$ can only occur at the random time grid $\{\theta^{(n)}_\ell\}_{\ell\ge 0}$, and {\red $J^{(n)}(\theta^{(n)}_\ell)=J( \chi^{(n)}_\ell)$} for all $\ell\ge 0$, then $\tau^{(n)}=\theta^{(n)}_{\gamma_n}$}. Thus,
\begin{align}
\mathbb{P}&\left(|\tau-\tau^{(n)}|>\kappa(q,\varepsilon)(\log n)n^{-1/2+{  \varepsilon/2}}\right)\nonumber\\
&  = \mathbb{P}\left(|\chi^{(n)}_{\gamma_n}-\theta^{(n)}_{\gamma_n}|>\kappa(q,\varepsilon)(\log n)n^{-1/2+{  \varepsilon/2}}\right)\nonumber\\
& =\mathbb{P}\left(|\chi^{(n)}_{\gamma_n}-\theta^{(n)}_{\gamma_n}|>\kappa(q,\varepsilon)(\log n)n^{-1/2+{  \varepsilon/2}}, \gamma_n\le \lfloor n^{1+\varepsilon}\rfloor\right)\nonumber\\
&\quad + \mathbb{P}\left(|\chi^{(n)}_{\gamma_n}-\theta^{(n)}_{\gamma_n}|>\kappa(q,\varepsilon)(\log n)n^{-1/2+{  \varepsilon/2}}, \gamma_n> \lfloor n^{1+\varepsilon}\rfloor\right)\nonumber\\
&\le \mathbb{P}\left(\max_{\ell\in\{1,\dots, \lfloor n^{1+\varepsilon}\rfloor\}}\left|\chi_\ell^{(n)}-\theta_\ell^{(n)}\right|\ge \kappa(q,\varepsilon)(\log n)n^{-1/2+{  \varepsilon/2}}\right) + \mathbb{P}\left(\gamma_n>\lfloor n^{1+\varepsilon}\rfloor\right).\label{eq:rate_PH_aux1}
\end{align}
By Lemma \ref{lem:ratestrong1}, the first summand in the r.h.s. of (\ref{eq:rate_PH_aux1}) is an $o(n^{-q})$ function. Meanwhile, 
\begin{align}
\mathbb{P}\left(\gamma_n>\lfloor n^{1+\varepsilon}\rfloor\right)& = \mathbb{P}\left(\chi^{(n)}_{\gamma_n}>\chi^{(n)}_{\lfloor n^{1+\varepsilon}\rfloor}\right) = \mathbb{P}\left(\tau>\chi^{(n)}_{\lfloor n^{1+\varepsilon}\rfloor}\right)\nonumber\\
& \le \mathbb{P}\left(\tau>\tfrac{\lfloor n^{1+\varepsilon}\rfloor}{n} - \left|\chi^{(n)}_{\lfloor n^{1+\varepsilon}\rfloor}-\tfrac{\lfloor n^{1+\varepsilon}\rfloor}{n}\right|\right)\nonumber\\
& \le \mathbb{P}\left(\tau>\tfrac{\lfloor n^{1+\varepsilon}\rfloor}{n} - \left|\chi^{(n)}_{\lfloor n^{1+\varepsilon}\rfloor}-\tfrac{\lfloor n^{1+\varepsilon}\rfloor}{n}\right|, \left|\chi^{(n)}_{\lfloor n^{1+\varepsilon}\rfloor}-\tfrac{\lfloor n^{1+\varepsilon}\rfloor}{n}\right| < \kappa(q,\varepsilon)(\log n)n^{-1/2+{  \varepsilon/2}} \right)\nonumber\\
&\quad + \mathbb{P}\left(\tau>\tfrac{\lfloor n^{1+\varepsilon}\rfloor}{n} - \left|\chi^{(n)}_{\lfloor n^{1+\varepsilon}\rfloor}-\tfrac{\lfloor n^{1+\varepsilon}\rfloor}{n}\right|, \left|\chi^{(n)}_{\lfloor n^{1+\varepsilon}\rfloor}-\tfrac{\lfloor n^{1+\varepsilon}\rfloor}{n}\right| \ge \kappa(q,\varepsilon)(\log n)n^{-1/2+{  \varepsilon/2}}\right)\nonumber\\
& \le \mathbb{P}\left(\tau>\tfrac{\lfloor n^{1+\varepsilon}\rfloor}{n} - \kappa(q,\varepsilon)(\log n)n^{-1/2+{  \varepsilon/2}} \right)\\
&\quad+ \mathbb{P}\left(\left|\chi^{(n)}_{\lfloor n^{1+\varepsilon}\rfloor}-\tfrac{\lfloor n^{1+\varepsilon}\rfloor}{n}\right| \ge \kappa(q,\varepsilon)(\log n)n^{-1/2+{  \varepsilon/2}}\right).\label{eq:rate_PH_aux2}
\end{align}
Once again, by Lemma \ref{lem:ratestrong1} the second summand in the r.h.s. of (\ref{eq:rate_PH_aux2}) is an $o(n^{-q})$ function, while the first summand is smaller or equal than $\mathbb{P}\left(\tau>(1-\varepsilon)n^\varepsilon \right)$ for sufficiently large $n$. Thus, (\ref{eq:rate_convergence_PH}) holds and so does the convergence in probability of $\tau^{(n)}$ to $\tau$.
$\hfill\square$\end{proof}

\begin{remark} As {  opposed} to the result presented in Theorem \ref{th:strongmain1}, the convergence obtained in Theorem \ref{abs_time_theo} is not strong, but in a probability sense. However, if $\mathbb{P}(\tau > (1-\varepsilon)n^\varepsilon)$ decreases to $0$ fast enough (say, at a rate proportional to $n^{-1-\varepsilon}$) then the convergence can be upgraded to strong via standard Borel-Cantelli arguments.
\end{remark}

{  Given that the diagonal elements of (\ref{eq:iph_struct}) are given by $-n$, and that the off-diagonal elements of each row are nonnegative and sum at most $n$, then ${\mat{S}}^{(n)}$ can be regarded as a bounded linear operator w.r.t. the $\infty$-norm for matrices, and thus, $\exp({\mat{S}}^{(n)}t) := \sum_{\ell=0}^\infty \tfrac{({\mat{S}}^{(n)}t)^\ell}{\ell !}$ is well defined and finite for each $t\ge 0$}. By simple arguments analogous to those employed in \cite{shi2005sph} and {  \cite[Eq. (2.5)]{bladt2015calculation}}, {   the conditional density function $f^{(n)}$ associated to $\tau^{(n)}$ given $\mathcal{N}^{(n)}$} then takes the form
\[f^{(n)}(t) = \vect{\alpha}^{(n)}\exp\left(\bfS^{(n)}t\right)\bfs^{(n)},\quad t\ge 0,\]
where $\bfs^{(n)}=-\bfS^{(n)}\bfe_\infty$ and $\bfe_\infty$ denotes an infinite-dimensional column vector of ones. Below we provide an equivalent form for $f^{(n)}$ which bypasses the need to manipulate infinite-dimensional matrix-exponentiation and multiplication. 

\begin{proposition}[Density of the approximation]\label{dens_prop}
Let $\tau^{(n)}$ be the approximation of Theorem \ref{abs_time_theo}. Then
\begin{align}\label{eq:ftaun}
f^{(n)}(t)&=\sum_{\ell=1}^\infty \left[\vect{\alpha}\bm{Q}^{(n)}_1\cdots\bm{Q}^{(n)}_{\ell-1}(\mat{I}-\bm{Q}_\ell^{(n)})\bfe\right] \frac{t^{\ell-1}}{(\ell-1)!}
n^\ell\exp(-n t) ,\quad t\ge0.
\end{align}
\end{proposition}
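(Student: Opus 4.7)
The plan is to derive the formula by decomposing $\tau^{(n)}$ according to the discrete step at which the embedded chain of Theorem \ref{th:approximation1}(a) is first absorbed. Set $\gamma_n := \inf\{\ell \ge 1 : J(\chi^{(n)}_\ell) = \star\}$. The crucial identification is that $\tau^{(n)} = \theta^{(n)}_{\gamma_n}$: by (\ref{eq:defJnt}) the process $J^{(n)}$ is constant on each $[\theta^{(n)}_\ell, \theta^{(n)}_{\ell+1})$ and equal to $J(\chi^{(n)}_\ell)$, so $\mathcal{J}^{(n)}$ enters $\star$ precisely at the Poisson epoch $\theta^{(n)}_{\gamma_n}$. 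This reduces the problem to computing the conditional law of $\theta^{(n)}_{\gamma_n}$ given $\mathcal{N}^{(n)}$.

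First I would compute $\mathbb{P}(\gamma_n = \ell \mid \mathcal{N}^{(n)})$. Conditional on $\mathcal{N}^{(n)}$, Theorem \ref{th:approximation1}(a) asserts that $\{J(\chi^{(n)}_\ell)\}_{\ell \ge 0}$ is an inhomogeneous Markov chain on $\mathcal{E}$ with initial distribution $\vect{\alpha}$ and substochastic transition matrices $\bm{Q}^{(n)}_\ell = \mat{I} + n^{-1}\bm{S}(\chi^{(n)}_\ell)$; the per-state probability of jumping to the cemetery $\star$ at step $\ell$ is the corresponding entry of $(\mat{I} - \bm{Q}^{(n)}_\ell)\bfe$. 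Standard first-passage bookkeeping for inhomogeneous chains then gives
\begin{equation*}
\mathbb{P}(\gamma_n = \ell \mid \mathcal{N}^{(n)}) = \vect{\alpha}\,\bm{Q}^{(n)}_1\cdots\bm{Q}^{(n)}_{\ell-1}\,(\mat{I} - \bm{Q}^{(n)}_\ell)\,\bfe, \qquad \ell \ge 1.
\end{equation*}

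Next I would exploit the independence built into the construction of Section \ref{sec:strongMJP}: the Poisson processes $\mathcal{M}$ and $\widehat{\mathcal{M}}^{(k)}$ that generate $\mathcal{M}^{(n)}$ are declared independent of $\mathcal{N}$, $\widehat{\mathcal{N}}^{(k)}$, and $\{U_\ell\}$, so $\mathcal{M}^{(n)}$ is independent of the pair $(\mathcal{N}^{(n)}, \{U_\ell\})$. Since $\gamma_n$ is measurable with respect to this pair, it follows that conditional on $\mathcal{N}^{(n)}$ the index $\gamma_n$ is independent of the whole sequence $\{\theta^{(n)}_\ell\}_\ell$. Because $\theta^{(n)}_\ell$ is the $\ell$-th arrival of a rate-$n$ Poisson process, it has Erlang$(\ell,n)$ density $\tfrac{t^{\ell-1}}{(\ell-1)!}\,n^\ell e^{-nt}$. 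The law of total probability then yields
\begin{equation*}
f^{(n)}(t) = \sum_{\ell=1}^\infty \mathbb{P}(\gamma_n = \ell \mid \mathcal{N}^{(n)}) \cdot \frac{t^{\ell-1}}{(\ell-1)!}\,n^\ell e^{-nt},
\end{equation*}
which is exactly (\ref{eq:ftaun}). The interchange of summation and conditional expectation is legitimate by nonnegativity of the summands (monotone convergence).

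No serious obstacle arises in this argument: once the identification $\tau^{(n)} = \theta^{(n)}_{\gamma_n}$ is recorded and the conditional independence of $\gamma_n$ from $\mathcal{M}^{(n)}$ has been observed, the remainder is routine bookkeeping. The only point that requires any care is the conditional-independence statement, and this is immediate from the explicit Kolmogorov-extension construction of $(\Omega,\mathcal{F},\mathbb{P})$ at the start of Section \ref{sec:strongMJP}. This approach also has the advantage of bypassing the infinite-dimensional matrix exponential $\exp(\bfS^{(n)}t)$ altogether, replacing it with a scalar Erlang mixture weighted by finite products of $p \times p$ matrices.
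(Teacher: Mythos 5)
Your proof is correct, but it follows a genuinely different route from the paper's. You argue probabilistically: identify $\tau^{(n)}=\theta^{(n)}_{\gamma_n}$ where $\gamma_n$ is the absorption step of the embedded chain, compute $\mathbb{P}(\gamma_n=\ell\mid\mathcal{N}^{(n)})=\vect{\alpha}\bm{Q}^{(n)}_1\cdots\bm{Q}^{(n)}_{\ell-1}(\mat{I}-\bm{Q}^{(n)}_\ell)\bfe$ by first-passage bookkeeping, and then use the conditional independence of $\gamma_n$ from $\mathcal{M}^{(n)}$ together with the Erlang$(\ell,n)$ law of $\theta^{(n)}_\ell$ to obtain the mixture. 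All of these steps are sound: the identification $\tau^{(n)}=\theta^{(n)}_{\gamma_n}$ is exactly the one the paper records in proving Theorem \ref{abs_time_theo}, the indexing of the substochastic matrices matches (\ref{eq:Qnell1}), and the independence claim follows from the explicit construction of the probability space. The paper, by contrast, deliberately gives an \emph{analytic} proof: it truncates $(\vect{\alpha}^{(n)},\bfS^{(n)})$ at the $m$-th block, computes the resolvent $(z\mat{I}-\mat{S}^{(n,m)})^{-1}$ explicitly by block multiplication, and evaluates $\vect{\alpha}^{(n)}\exp(\bfS^{(n)}t)\bfs^{(n)}$ via holomorphic functional calculus and the residue theorem. (The paper even remarks that a probabilistic argument "may be borrowed from Theorem \ref{th:transition_prob1}" but opts for the analytic one.) Your route is more elementary, avoids infinite-dimensional operator calculus entirely, and makes transparent that (\ref{eq:ftaun}) is an Erlang mixture weighted by the absorption-step distribution; the paper's route has the advantage of directly verifying that the stated formula coincides with the infinite-dimensional phase-type density $\vect{\alpha}^{(n)}\exp(\bfS^{(n)}t)\bfs^{(n)}$, a link your argument does not itself establish.
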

\begin{proof}\textbf{Proof.}
{  The main argument may be borrowed from Theorem \ref{th:transition_prob1}, however we instead give an analytic proof which also provides insight into our construction.} 

Truncating the infinite-dimensional vector and matrix $(\vect{\alpha}^{(n)},\bfS^{(n)})$ at the $m$-th block,
we obtain a defective finite-dimensional phase--type distribution with representation of the form
\[  \vect{\alpha}^{(n,m)}=(\vect{\alpha},\vect{0},\dots,\vect{0}), \ \ \mat{S}^{(n,m)}=
\begin{pmatrix}
-n\mat{I} & n \bm{Q}_1^{(n)} & 0 & ... & 0 \\
0 & -n\mat{I} & n \bm{Q}_2^{(n)}  & ... & 0 \\
0 & 0 & -n\mat{I} & ... & 0 \\
\vdots & \vdots & \vdots & \ddots & \vdots\\
0 & 0 & 0 & ... & -n\mat{I}
\end{pmatrix} ,
   \]
  and by block-multiplication, it is not hard to see that for all $z\in\mathbb{C}\setminus\{-n\}$,  \[  (z\mat{I}-\mat{S}^{(n,m)})^{-1} =
\begin{pmatrix}
\frac{\mat{I}}{z+n} & \frac{n \bm{Q}_1^{(n)}}{(z+n)^2} & \frac{\bm{Q}_1^{(n)}\bm{Q}_2^{(n)} n^2}{(z+n)^3} & \cdots & \frac{\bm{Q}_1^{(n)}\cdots \bm{Q}^{(n)}_{m-1}n^{m-1}}{(z+n)^m} \\
0 & \frac{\mat{I}}{z+n} & \frac{n \bm{Q}_2^{(n)}}{(z+n)^2} & \cdots & \frac{\bm{Q}_2^{(n)}\cdots \bm{Q}_{m-1}^{(n)}n^{m-2}}{(z+n)^{m-1}} \\
0 & 0 & \frac{\bf{I}}{z+n} & \cdots & \frac{\bm{Q}_3^{(n)}\cdots\bm{Q}_{m-1}^{(n)} n^{m-3}}{(z+n)^{m-1}} \\
\vdots & \vdots & \vdots & \ddots & \vdots\\
0 & 0 & 0 & \cdots & \frac{\mat{I}}{z+n}
\end{pmatrix}.
     \]
An analogous expression holds for the infinite-dimensional inverse of $(z\mat{I}-\mat{S}^{(n)})$, so that we may use holomorphic functional calculus for bounded operators (see e.g. \cite{haase2006functional}) and the residue theorem to get
 \begin{eqnarray*}
\vect{\alpha}^{(n)} \exp({\mat{S}}^{(n)}t)\vect{s}^{(n)}&=&\frac{1}{2\pi \ii}\int_\Gamma \exp(z)\vect{\alpha}^{(n)}(z\mat{I}-t \mat{S}^{(n)})^{-1}\vect{s}^{(n)}dz  \\
&=&  \frac{1}{2\pi \ii}\int_\Gamma \exp(z) \vect{\alpha}\left[ \sum_{\ell=1}^\infty
\frac{(n\bm{Q}_1^{(n)} t)(n\bm{Q}_2^{(n)} t)\cdots (n\bm{Q}_{\ell-1}^{(n)}t)n(\mat{I}-\bm{Q}_\ell^{(n)})\bfe}{(z+n t)^{\ell}}
 \right]dz \\ 
&=& \sum_{\ell=1}^\infty n^\ell\left[\vect{\alpha}\bm{Q}^{(n)}_1\cdots\bm{Q}^{(n)}_{\ell-1}(\mat{I}-\bm{Q}_\ell^{(n)})\bfe\right]t^{\ell-1}
\frac{1}{2\pi \ii}\int_\Gamma  \frac{\exp(z)}{(z+n t)^{\ell}}dz \\
&=& \sum_{\ell=1}^\infty \left[\vect{\alpha}\bm{Q}^{(n)}_1\cdots\bm{Q}^{(n)}_{\ell-1}(\mat{I}-\bm{Q}_\ell^{(n)})\bfe\right]\frac{t^{{  \ell}-1}}{(\ell-1)!}
n^\ell\exp(-n t), \\
\end{eqnarray*}  
where $\Gamma$ is a path enclosing $\{-tn\}$, the (only) eigenvalue of $t\mat{S}^{(n)}$.
$\hfill\square$\end{proof}

\begin{remark} 
If we impose Lipschitz continuity on the subintensity matrix, we may replace {\red $\bm{Q}^{(n)}_\ell$} in Proposition \ref{dens_prop} by either of:
\begin{equation}\label{eq:Qnell2_IPH}
\widehat{\bm{Q}}^{(n)}_\ell=\bm{I}+\frac 1n \left[ \bm{S}(\ell/n) \right],\quad \widetilde{\bm{Q}}^{(n)}_\ell=\bm{I}+\frac 1n \E\left[ \bm{S}(\chi^{(n)}_\ell) \right],
\end{equation}
to obtain weakly convergent unconditional distributions. The associated unconditional densities will be denoted by $\hat f^{(n)}$ and $\tilde f^{(n)}$, respectively.

For this reason, in the remainder of the applications, we will purposefully refrain from explicitly stating whether the approximation is conditional or unconditional, and when dealing with the unconditional case, we specialize only on $\tilde f^{(n)}$. Analysis using $\hat f^{(n)}$ yields qualitatively similar results and thus are omitted. {\red Also note that these densities converge to $f^{(n)}$ due to the fact that the transition probabilities converge pointwise for all $t\ge 0$ (Theorem \ref{gen_convs}), and thus, the corresponding absorption times converge weakly.
}
\end{remark}

Since convergence in probability implies convergence in distribution, Theorem \ref{abs_time_theo} together with Proposition \ref{dens_prop} provide a method to approximate an IPH distribution with bounded subintensity matrix function via a sequence of infinite-dimensional phase-type distributions with density given by (\ref{eq:ftaun}). When the matrices $\bfS(s)$ and $\bfS(t)$ commute for any $s,t$ we may further simplify the transition matrix of $\mathcal{J}$ in $\{1,\dots,p\}\cup\{\star\}$ from time $s$ to $t$, $s<t$, into the form 
\begin{align*}
\begin{pmatrix}
    \exp\left(\int_s^t\bfS(u)\dd u\right) & \bfe-\exp\left(\int_s^t\bfS(u)\dd u\right) \bfe \\
    \textbf{0} & 1
    \end{pmatrix};
\end{align*}
see \cite{albrecher2019inhomogeneous}. A narrower class that allows for effective statistical analysis arises from imposing the following simplification:
$$\bfS(t) = \lambda(t)\,\bfS,\quad t\ge 0,$$ where $\lambda:\mathds{R}_+\rightarrow\mathds{R}_+$ is some locally integrable inhomogeneity function, and $\bfS$ is a fixed subintensity matrix that does not depend on time $t$. For this reduced case, the standard notation is then \begin{align}
\tau \sim  \mbox{IPH}(\vect{\alpha} , \bfS , \lambda ){  .}
\end{align} Note that PH distributions are a special case when $\lambda\equiv 1$. Below we explore two examples of $\mbox{IPH}(\vect{\alpha} , \bfS , \lambda )$ distributions found in \cite{albrecher2019inhomogeneous} and provide their infinite-dimensional phase-type approximations.

\begin{example}[Approximating the Matrix Gompertz distribution]\rm
Consider the case where we have $\tau\sim \mbox{IPH}(\vect{\alpha} , \bfS , \lambda )$ with $\lambda(s)=\exp(\beta s),$ $\beta>0$. This distribution has an asymptotically Gompertz hazard rate. Then, for $n>\beta$ we have
$$\int_0^\infty \frac{ (n s)^{\ell-1}}{(\ell -1)!}e^{-ns}\lambda(s) \, \dd s =\frac{1}{n}\left(\frac{n}{n-\beta}\right)^\ell,$$
from which
\begin{equation}\label{eq:iph_struct_gomp}
{\widetilde{\mat{S}}}^{(n)}
=\begin{pmatrix}-n\bm{I}& \left(\frac{n}{n-\beta}\right)\bfS+n\bm{I}& &&\\
&-n\bm{I}&\left(\frac{n}{n-\beta}\right)^2\bfS+n\bm{I}& &\\
&&-n\bm{I}&\left(\frac{n}{n-\beta}\right)^3\bfS+n\bm{I}&\\
&&&\ddots&\ddots
\end{pmatrix}.\end{equation}
In the left panel of Figure \ref{plots:iph_approxs} such an approximation is carried out for varying truncation limits\footnote{By truncation limit we refer to a finite-dimensional approximation to an infinite-dimensional matrix, where the chosen finite order \textit{truncates} the number of infinite blocks.} and with $n=20$. The original IPH parameters are given by
\begin{align*}
	\vect{\alpha}=(0.42,\, 0.58),\quad \mat{S}(t)= \exp(t)\left( \begin{array}{cc}
		-0.78 & 0.57 \\
		0.91 & -1.81
	\end{array} \right),\quad t\ge0.
\end{align*}

\end{example}

\begin{example}[Approximating a Matrix Weibull distribution]\rm \label{ex:2_weibull}
Now consider $\tau\sim \mbox{IPH}(\vect{\alpha} , \bfS , \lambda )$ with $\lambda(s)=\beta s^{\beta-1},$ $\beta>0$. This distribution has asymtotic Weibull hazard rate. Then
$$\int_0^\infty \frac{ (n s)^{\ell-1}}{(\ell -1)!}e^{-ns}\lambda(s) \, \dd s =\frac{\beta {   n^{-\beta}}\Gamma(\ell+\beta-1)}{(\ell-1)!},$$
from which
\begin{equation}\label{eq:iph_struct_weibull}
{\widetilde{\mat{S}}}^{(n)}
=\begin{pmatrix}-n\bm{I}& \frac{\beta n^{1-\beta}\Gamma(\beta)}{0!}\bfS+n\bm{I}& &&\\
&-n\bm{I}&\frac{\beta n^{1-\beta}\Gamma(\beta+1)}{1!}\bfS+n\bm{I}& &\\
&&-n\bm{I}&\frac{\beta n^{1-\beta}\Gamma(\beta+2)}{2!}\bfS+n\bm{I}&\\
&&&\ddots&\ddots
\end{pmatrix}.\end{equation}
In the right panel of Figure \ref{plots:iph_approxs} we illustrate such an approximation for varying truncation limits and with $n=100$. The original IPH parameters are given by
\begin{align*}
	\vect{\alpha}=(0.5,\, 0.5),\quad \mat{S}(t)= 3t^2 \left( \begin{array}{cc}
		-3 & 0.1 \\
		0.01 & -0.1
	\end{array} \right),\quad t\ge0.
\end{align*}

\end{example}

\begin{figure}[!htbp]\centering
\includegraphics[width=0.49\textwidth]{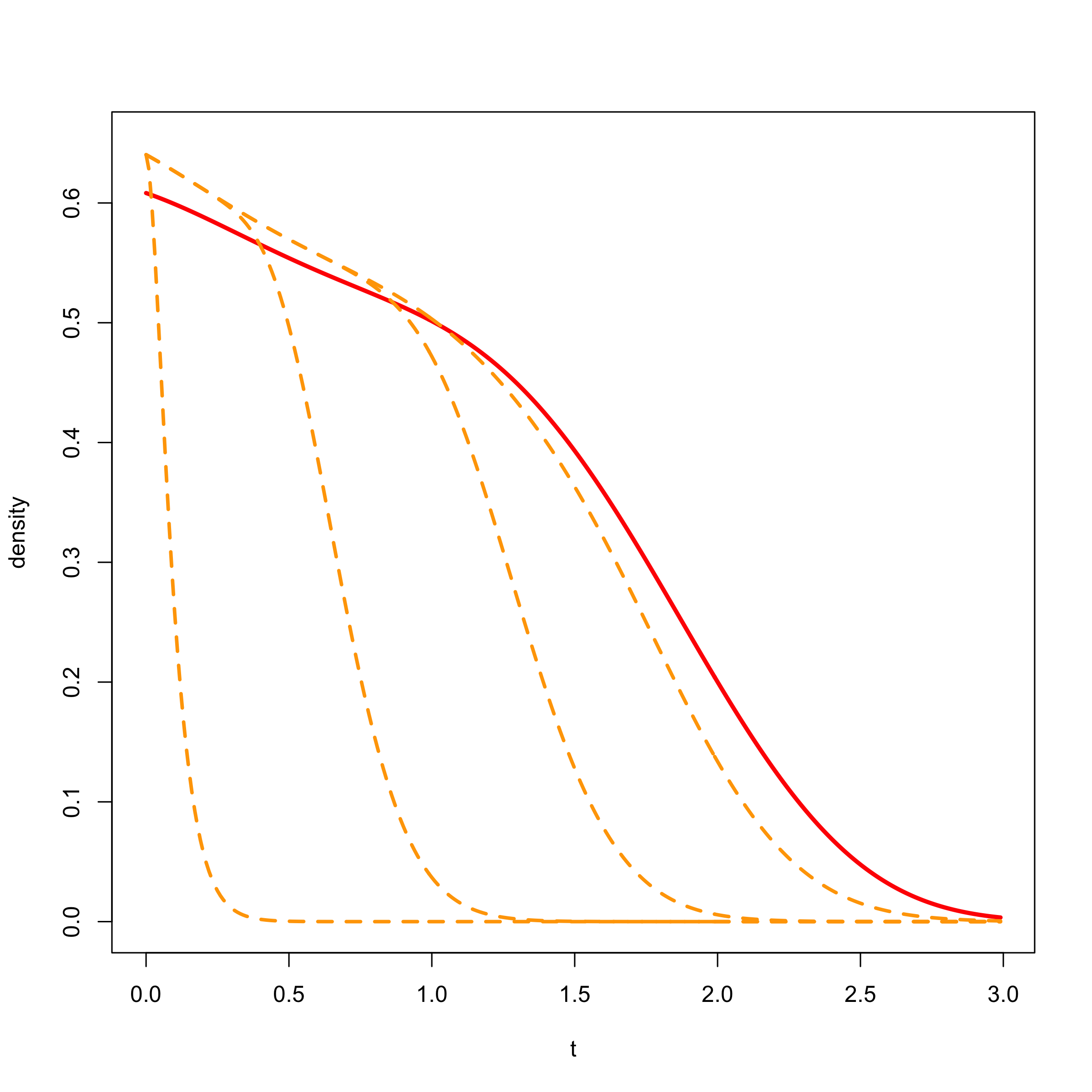}
\includegraphics[width=0.49\textwidth]{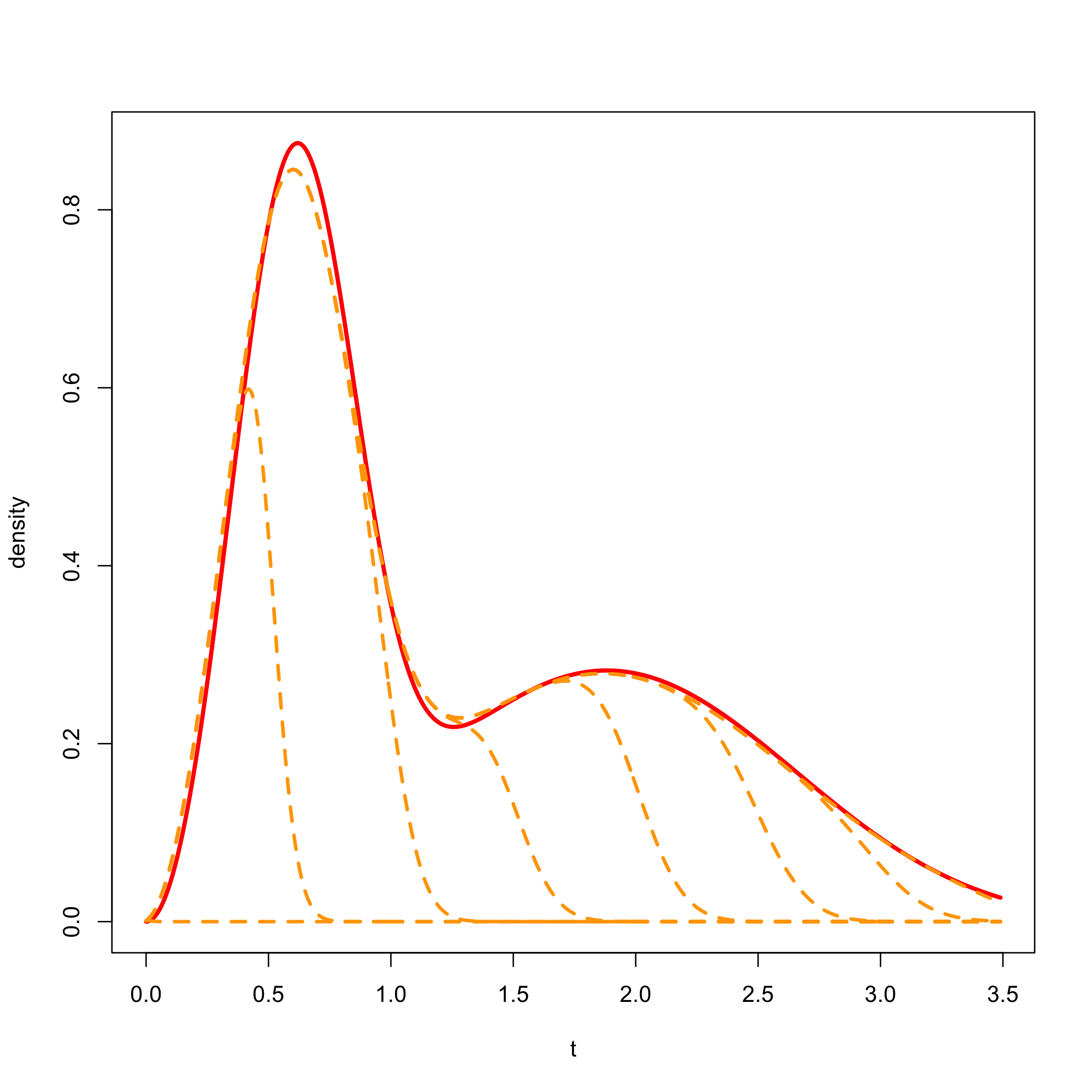}
\caption{Density approximations to given IPH distributions. Left panel: approximation to Matrix-Gompertz with upper truncation limit of $2,15,27,40$ and $n=20$. Right panel: approximation to Matrix-Weibull with upper truncation limit of $2,  52, 102, 152, 202, 252, 302, 352$ and $n=100$.} \label{plots:iph_approxs}
\end{figure}

\begin{remark}\rm
Note that the above distributions do not {   a priori have a bounded hazard rate nor are Lipschitz. However, truncation ammends this issue}, by replacing the hazard rate $\lambda(s)$ with $\lambda(s) \wedge K$ where $K=K(n)\to\infty$ as $n\to \infty$. In that case, we need to compute $\int_0^\infty \frac{ (n s)^{\ell-1}}{(\ell -1)!}e^{-ns}(\lambda(s) \wedge K)\, \dd s$ in place of $\int_0^\infty \frac{ (n s)^{\ell-1}}{(\ell -1)!}e^{-ns}\lambda(s) \, \dd s$. Numerically, however, one can choose $K$ large enough such that the two integrals are  indistinguishable; here we implemented the latter, since it has closed form expressions.
\end{remark}

We now proceed to illustrate the above convergence from a different perspective, which comes in the form of ruin probabilities, as shown below.

\subsection{Approximating ruin probabilities in the Cram\'er-Lundberg model.}

Let us consider a process $\{R(t)\}_{t\ge 0}$ of the form
\begin{align}
R(t)=u+\rho t-\sum_{k=0}^{C(t)}\tau_k,\quad t\ge0,
\end{align}
where $u\ge 0$, $\rho>0$, $\tau_k$ are i.i.d. positive random variables, and $\{C(t)\}_{t\ge 0}$ is an independent Poisson process with intensity $\nu$. Such a process is known in the risk theory literature as \emph{Cram\'er-Lundberg} (\cite{lundberg1903approximerad,cramer1955collective}), which is a simple model to describe how the initial capital $u$ of an insurance company increases in a piecewise fashion with a constant premium rate $\rho$ and decreases through jumps whose size matches the severity of claims $\tau_1,\tau_2,\dots$. 
A classic problem in the field consists in computing the \emph{infinite-horizon probability of ruin} as $u$ varies, defined by
\begin{align}
\psi(u):=\mathbb{P}\left(\inf_{t\ge0}\{R(t)\}<0\;|\;R({  0})=u\right),\quad u>0.
\end{align}
Regardless of its apparent simplicity, computing $\psi(u)$ is a challenging problem, with closed-form solutions available only for a handful of claim distributions, including the phase-type class (see e.g. \cite[Chapter IX]{asmussen2010ruin}). To the best of the authors' knowledge, the case when {  $\tau_k$} follows an IPH distribution has not been analyzed in the literature.

{  Here we propose to approximate each $\tau_k$ by a sequence of infinite-dimensional phase-type random variables $\{\tau^{(n)}_k\}_{n\ge1}$ obtained from {  the unconditional distributions of Section \ref{sec:uncond1}}, say with parameters $\bfS^{(n)}$ and $\vect{\alpha}^{(n)}$. Thus, we are interested in computing the infinite-horizon probability of ruin of the approximated risk process
\begin{align}
R^{(n)}(t)=u+\rho t-\sum_{k=0}^{C(t)}\tau^{(n)}_k,\quad t\ge 0.
\end{align}
We can then consider their infinite-horizon ruin probabilities
\begin{align}
\psi^{(n)}(u):=\P\left(\inf_{t\ge0}\{R^{(n)}(t)\}<0\;|\;R^{(n)}(0)=u\right)\quad u\ge 0,
\end{align}
as a sequence that approximately describes the original ruin probability $\psi(u)$.} By \cite{bladt2015calculation}, we have that
\begin{align}
\psi^{(n)}(u)=\vect{\alpha}^{(n)}_{-}\exp\left((\bfS^{(n)}+\bfs^{(n)}\vect{\alpha}^{(n)}_{-})u\right)\bfe,
\end{align}
where 
$\vect{\alpha}^{(n)}_{-}=\frac{\nu}{\rho} \vect{\alpha}^{(n)}[-\bfS^{(n)}]^{-1}$. In Figure \ref{plots:ruin_approx} we plot the above approximation for varying initial capital $u$, together with a Monte Carlo simulation for reference; we employ the IPH parameters from Example \ref{ex:2_weibull}.

\begin{figure}[!htbp]\centering
\includegraphics[width=0.7\textwidth]{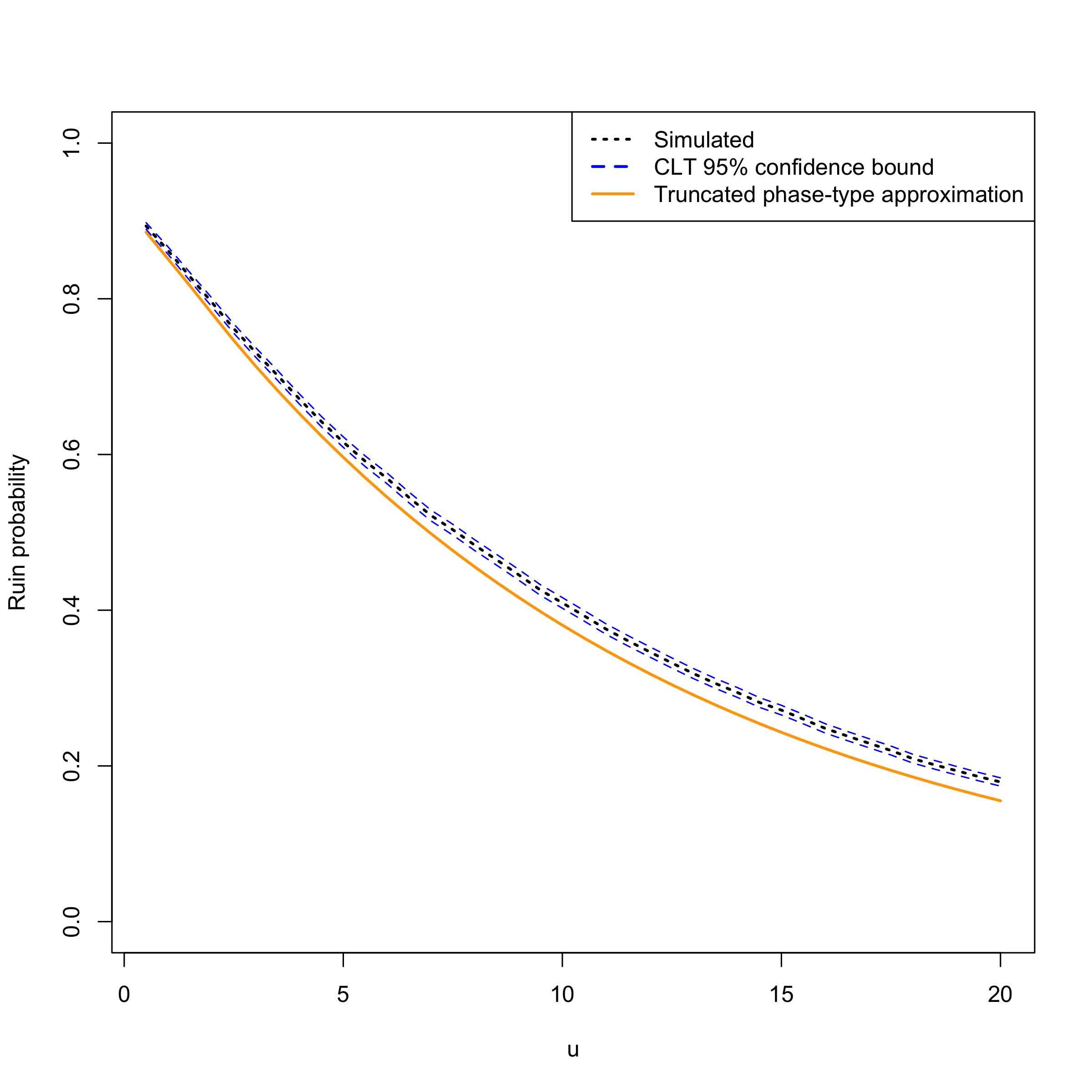}
\caption{Monte Carlo (from 20,000 simulations) versus approximated ruin probability for the Cram\'er-Lundberg model with IPH (Matrix-Weibull) distributed claim sizes, as a function of the initial capital $u$. The truncation limit is taken to be $352$ and $n=100$.}
\label{plots:ruin_approx}
\end{figure}

\subsection{Hazard rate approximations to arbitrary distributions and to data}

Any absolutely continuous distribution can be seen as a one-dimensional inhomogeneous phase-type distribution by considering a single-state state-space ($p=1$) and letting time run in accordance with the hazard rate of the target distribution. We may thus specialize the results from Theorem \ref{abs_time_theo} and Proposition \ref{dens_prop} to provide a novel (to the best of the authors' knowledge) infinite-dimensional phase-type approximation to arbitrary distributions of bounded hazard rate. We provide the details below.

\begin{corollary}[Hazard rate approximation]\label{abs_time_theo_uni}
Let $Y$ be an arbitrary random variable with a uniformly bounded hazard rate $h$, {  otherwise regarded as a one-dimensional $\mbox{IPH}$-distributed random variable of parameters $(\vect{\alpha} , \bfS , h )$ with $\vect{\alpha}=(1)$ and $\bfS=(-1)$}. Then there exists a sequence of random variables $\{Y^{(n)}\}_{n\ge \lambda_0}$ such that $Y^{(n)}\rightarrow Y$ in {  distribution} as $n\rightarrow\infty$, and where each $Y^{(n)}$ follows an infinite-dimensional Coxian phase-type distribution with initial distribution $\vect{\alpha}=(1,0,0,\dots)$ and subintensity matrix
\begin{equation}
{\widetilde{\mat{S}}}^{(n)}=\begin{pmatrix}-n&n\widetilde{Q}^{(n)}_1& &&\\
&-n&n\widetilde{Q}^{(n)}_2& &\\
&&-n&n\widetilde{Q}^{(n)}_3&\\
&&&\ddots&\ddots
\end{pmatrix},\end{equation}
where
\begin{equation}
\widetilde{Q}_\ell^{(n)}= 1-\int_0^\infty \frac{ (n s)^{\ell-1}}{(\ell -1)!}{  e^{-ns}}h(s) \, \dd s.\end{equation}
Furthermore, the density of the approximation is given by
\begin{align}
\tilde f_{Y^{(n)}}(y)&=
\sum_{\ell=1}^p\left((1-\widetilde{Q}^{(n)}_\ell)\prod_{m=1}^{\ell-1}\widetilde{Q}^{(n)}_m \right)\frac{y^{\ell-1}}{(\ell-1)!}
n^\ell\exp(-n y), \quad y\ge 0.
\end{align}
\end{corollary}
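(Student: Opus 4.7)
The plan is to identify the random variable $Y$ as the absorption time of a one-state time-inhomogeneous Markov jump process driven by the scalar subintensity function $\bm{S}(t)=-h(t)$. This is justified because $\P(Y>t)=\exp\!\left(-\int_0^t h(u)\,\mathrm{d}u\right)$ by definition of the hazard rate, which matches the product-integral survival function $\exp\!\left(\int_0^t \bm{S}(u)\,\mathrm{d}u\right)$ of a one-state IPH with $\vect{\alpha}=(1)$, $\bfS=(-1)$, and $\lambda(t)=h(t)$. Uniform boundedness of $h$ then guarantees that Condition 1 is satisfied for $\bm{S}(\cdot)$, so the results of Section \ref{sec:strongMJP} and Section \ref{sec:IPH} apply.

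With this identification in hand, I would directly invoke Theorem \ref{abs_time_theo} in its unconditional form (using $\widetilde{\bm{Q}}^{(n)}_\ell$ in place of $\bm{Q}^{(n)}_\ell$, as justified in the remark following Proposition \ref{dens_prop}) to obtain a sequence $\{Y^{(n)}\}$ of infinite-dimensional phase-type random variables converging to $Y$ in distribution. Plugging $p=1$ into the block-bidiagonal subintensity matrix \eqref{eq:iph_struct} gives the scalar bidiagonal matrix $\widetilde{\bm{S}}^{(n)}$ of the statement. To derive the announced closed-form for $\widetilde{Q}^{(n)}_\ell$, I would use that $\chi^{(n)}_\ell$ is Erlang-distributed with shape $\ell$ and rate $n$, so that its density is $\tfrac{(ns)^{\ell-1}}{(\ell-1)!}\,n\,e^{-ns}$; substituting into
\[
\widetilde{Q}^{(n)}_\ell \;=\; 1+\tfrac{1}{n}\,\E\!\left[\bm{S}(\chi^{(n)}_\ell)\right] \;=\; 1-\tfrac{1}{n}\,\E\!\left[h(\chi^{(n)}_\ell)\right]
\]
and pulling the $n$ out of the integrand yields the asserted formula for $\widetilde{Q}^{(n)}_\ell$.

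For the density, I would specialize Proposition \ref{dens_prop} to the scalar setting: with $\vect{\alpha}=(1)$ and $\bfe=1$, the matrix products in the density formula collapse into
\[
\vect{\alpha}\,\widetilde{\bm{Q}}^{(n)}_1\cdots\widetilde{\bm{Q}}^{(n)}_{\ell-1}\bigl(\bm{I}-\widetilde{\bm{Q}}^{(n)}_\ell\bigr)\bfe \;=\; \bigl(1-\widetilde{Q}^{(n)}_\ell\bigr)\prod_{m=1}^{\ell-1}\widetilde{Q}^{(n)}_m,
\]
which, when inserted into \eqref{eq:ftaun}, gives the stated Coxian density $\tilde f_{Y^{(n)}}$.

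The proof is essentially a routine specialization, and I do not expect any step to be technically demanding. The only mild subtlety worth verifying is that $\widetilde{Q}^{(n)}_\ell\in[0,1]$ so that the approximation is a bona fide probabilistic object; this follows from $0\le h(s)\le \lambda_0\le n$ (valid for all $n\ge\lambda_0$) together with the fact that $\tfrac{(ns)^{\ell-1}}{(\ell-1)!}\,n\,e^{-ns}$ integrates to $1$ over $\mathbb{R}_+$. Thus $1-\widetilde{Q}^{(n)}_\ell=\tfrac{1}{n}\E[h(\chi^{(n)}_\ell)]\in[0,1]$, as required.
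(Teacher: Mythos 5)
Your proposal is correct and follows essentially the same route the paper intends for this corollary, which is stated without a printed proof precisely because it is the routine specialization you carry out: identify $Y$ with the absorption time of the one-state IPH with $\bm{S}(t)=-h(t)$, invoke Theorem \ref{abs_time_theo} and Proposition \ref{dens_prop} with $p=1$, and compute $\widetilde{Q}^{(n)}_\ell=1+\tfrac1n\E[\bm{S}(\chi^{(n)}_\ell)]$ via the $\mathrm{Erl}(\ell,n)$ density of $\chi^{(n)}_\ell$. The only caveat, inherited from the paper's own statement rather than introduced by you, is that the weak convergence of the unconditional $\widetilde{Q}$-based approximation rests on Theorem \ref{gen_convs}, which formally requires $h$ to be Lipschitz and not merely uniformly bounded.
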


\begin{remark}\rm
When knowledge of $h$ is available, the integrals $\widetilde{Q}_\ell^{(n)}$ can be either explicitly or numerically computed. On the other hand, if $h$ is unknown and we are presented with data, we may use the following identity
\begin{align}\label{hrapprox}
\int_0^\infty \frac{ (n s)^{\ell-1}}{(\ell -1)!}{  e^{-ns}}h(s) \, \dd s\approx \int_0^\infty \frac{ (n s)^{\ell-1}}{(\ell -1)!}{  e^{-ns}} \, \dd \hat H(s),
\end{align}
where $\hat H$ is a suitable estimator of the cumulative hazard. For instance, in a fully observed setting, $\hat H$ can simply be the empirical cumulative hazard, whereas, in right-censored scenarios, we may use versions of the Nelson-Aalen estimator (see \cite{lawless2011statistical} for their definition and a general overview of survival analysis techniques). In Figure \ref{plots:real_data_approx} we illustrate this methodology with a multimodal simulated dataset.
\end{remark}

\begin{figure}[!htbp]\centering
\includegraphics[width=0.7\textwidth]{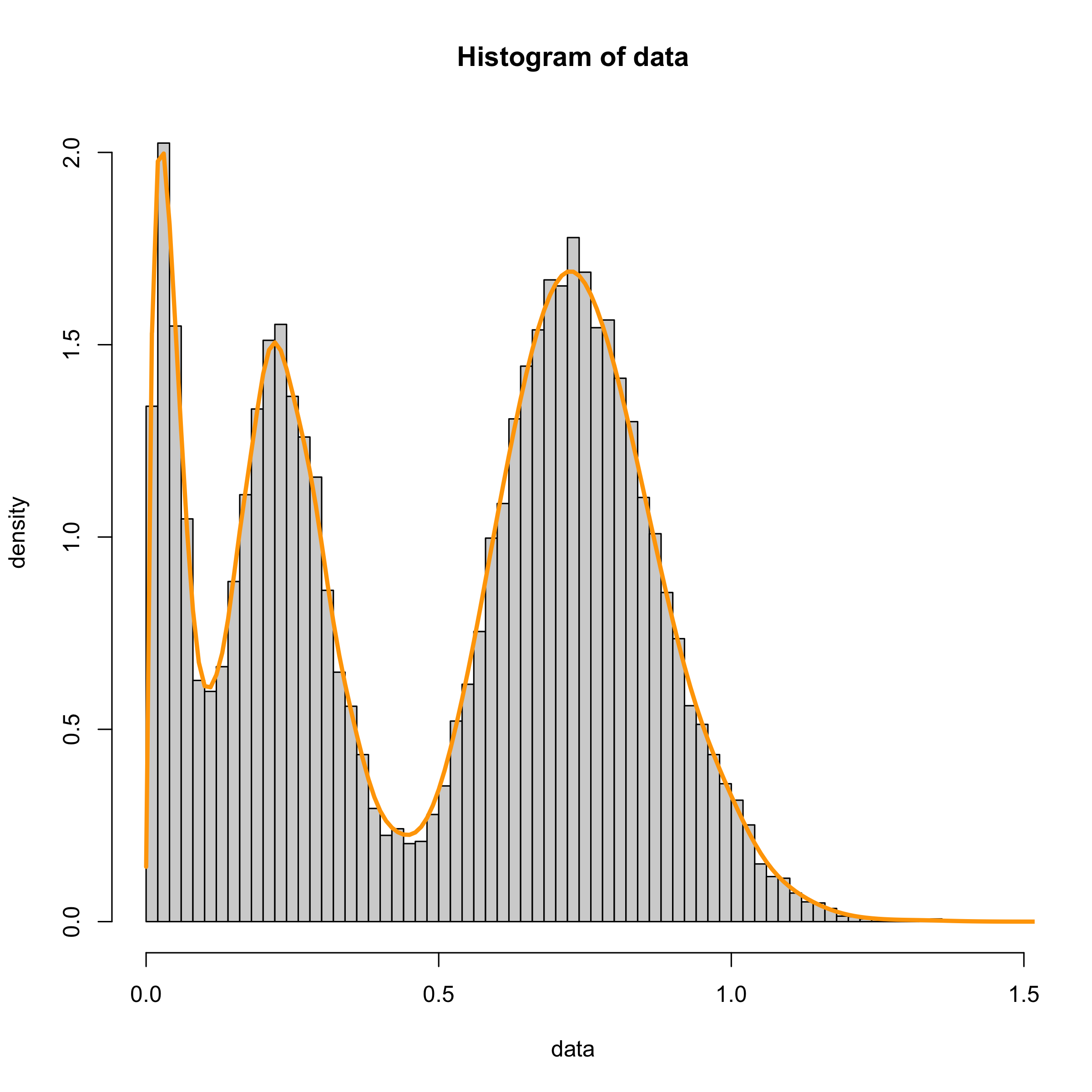}
\caption{Hazard rate approximation \eqref{hrapprox} to heterogeneous synthetic data. The simulation comprises $20,000$ $\Gamma(30,40)$ variables, $10,000$ 
$\Gamma(10,40)$ variables, and $5,000$ $\Gamma(2,40)$ variables. The approximation has $n=1500$ and truncation limit of $\lceil n M\rceil$, where $M$ is the largest data point in the sample. We have used $\hat H=-\log(1-\hat F)$, where $\hat F$ is the ecdf of the sample.}
\label{plots:real_data_approx}
\end{figure}

\subsection{Kulkarni's multivariate phase-type distributions}

Efforts to translate the success of PH distributions towards a multivariate setting have been done in \cite{assaf1984multivariate,kulkarni1989new,bladt2010multivariate,bladt2021tractable}, classes which enjoy different degrees of tractability and generality. Particularly, Kulkarni's multivariate phase-type distribution ($\mbox{MPH}^*$) constructed in \cite{kulkarni1989new} has enjoyed considerable attention due to its natural probabilistic interpretation, which we explain next. A nonnegative vector $(Y_1,\dots, Y_m)$ is said to be $\mbox{MPH}^*$-distributed if 
\begin{equation}\label{eq:defYMPHstar}Y_k\stackrel{d}{=}\int_0^\tau r(J(t),k) \, \dd t,\quad k\in\{1,\dots,m\},\end{equation}
where $\mathcal{J}=\{J(t)\}_{t\ge 0}$ is a terminating time-homogeneous Markov jump process on $\mathcal{E}$, $\tau$ is its termination time, and $\bm{R}=\{r(i,k)\}_{i\in\mathcal{E}, k\in\{1,\dots,m\}}$ is a nonnegative $p\times m$-dimensional matrix. The $\mbox{MPH}^*$ class enjoys a considerable level of flexibility, which together with the recently developed methods for data fitting in \cite{breuer2016semi} and \cite{albrecher2022fitting}, makes it an attractive option to consider for stochastic modelling {  purposes}. However, its main drawback is the lack of an explicit multivariate density function. Indeed, a multivariate density function can be only computed in certain subclasses, such as the one defined in \cite{assaf1984multivariate}, while the general case is (currently) only known as the solution to a system of partial differential equations (\cite{kulkarni1989new}).

Let us consider the multivariate random vector $(Y_1,\dots, Y_m)$ defined in (\ref{eq:defYMPHstar}) where $\mathcal{J}$ is instead a terminating \emph{time-inhomogeneous} Markov jump process of parameters with initial distribution $\bm{\pi}$ and subintensity matrix function $\bm{S}(t)$. Using the strongly convergent result in Theorem \ref{th:strongmain1}, here we present a way to {  conditionally} approximate the multivariate density function of $(Y_1,\dots, Y_m)$. While such a class differs from that defined in \cite{albrecher2022fitting} and has not been previously considered in the existing literature, it trivially generalizes the time-homogeneous version $\mbox{MPH}^*$. Thus, the methodology presented next provides a way to approximate density functions associated to multivariate phase-type distributions, which is novel even when reduced to the time-homogeneous case.
 
In order to provide the aforementioned multivariate density function approximation of $(Y_1,\dots, Y_m)$ with $\mathcal{J}$ a time-inhomogeneous Markov jump process, suppose that the probability space $(\Omega,\mathcal{F},\mathbb{P})$ introduced in Section \ref{sec:strongMJP} further supports the following independent components:
\begin{itemize}
  \item $m$ independent Poisson processes $\mathcal{M}_1\dots,\mathcal{M}_m$ with common intensity $\lambda_0$;
  \item $m$ independent sequences of independent Poisson processes $\{\widehat{\mathcal{M}}^{(\ell)}_k\}_{\ell=\lambda_0+1}^\infty$, $k\in\{1,\dots,m\}$, of parameter $1$.
\end{itemize}

For each $k\in\{1,\dots, m\}$ and $n\ge \lambda_0$, let $M_k^{(n)}$ denote the superposition of $\mathcal{M}_k$ and $\widehat{\mathcal{M}}_k^{(\lambda_0+1)},\dots, \widehat{\mathcal{M}}_k^{(n)}$, and let $\big\{\theta^{(n)}_{k,\ell}\big\}_{\ell\ge 0}$ correspond to the arrival times of $M_k^{(n)}$. For each $k\in\{1,\dots,m\}$, the Poisson process $M_k^{(n)}$ is independent of $\mathcal{N}^{(n)}$ and $\{U_\ell\}_{\ell =0}^\infty$, so that the strongly convergent result in Theorem \ref{th:strongmain1} still holds if we replace $\mathcal{J}^{(n)}$ with $\mathcal{J}^{(n)}_k$, where the latter is defined by
  \[J_k^{(n)}(t)= J\left(\chi^{(n)}_{\ell}\right)\quad\mbox{for}\quad t\in \left[\theta_{k,\ell}^{(n)}, \theta_{k,\ell+1}^{(n)}\right).\]
Then, the vector $(Y_1^{(n)},\dots, Y_m^{(n)})$ where
  \[Y_k^{(n)}=\int_0^{{  \tau^{(n)}_k}} r(J^{(n)}_k(t),k) \, \dd t,\quad k\in\{1,\dots,m\},\]
  (with {  $\tau^{(n)}_k$} equating the termination time of $\mathcal{J}^{(n)}_k$) converges strongly to 
  \begin{equation}\label{eq:timeinhomMPHstar1}\left(\int_0^{\tau} r(J(t),1) \, \dd t, \int_0^{\tau} r(J(t),2) \, \dd t,\dots, \int_0^{\tau} r(J(t),m) \, \dd t\right),\end{equation}
a multivariate phase-type random vector which allows for time-inhomogeneity. This strongly convergent result readily  implies that the multivariate density function of $(Y_1^{(n)},\dots, Y_m^{(n)})$ converges to that of (\ref{eq:timeinhomMPHstar1}), justifying its use as an approximation.  Furthermore, by construction, the processes $\mathcal{J}^{(n)}_1, \dots, \mathcal{J}^{(n)}_m$ are conditionally independent given {\red $\{J\left(\chi^{(n)}_{\ell}\right)\}_{\ell}$}, which allows us to explicitly express the {  conditional} {  multivariate} density function of $(Y_1^{(n)},\dots, Y_m^{(n)})$ as follows.

\begin{theorem}
  The random vector $(Y_1^{(n)},\dots, Y_m^{(n)})$ {  conditional on $\mathcal{N}^{(n)}$} follows the infinite mixture density
  \begin{equation}\label{eq:approxKulkarni1}f(x_1,x_2,\dots,x_m)= \sum_{\bm{i}=(i_1,\dots, i_p)\in\mathcal{Z}}\sum_{j\in\mathcal{E}} \alpha_{i_1,\dots,i_p; j} \beta_{|\bm{i}|, j}f_{1;i_1,\dots,i_p}(x_1)f_{2;i_1,\dots,i_p}(x_2)
  \cdots f_{m;i_1,\dots,i_p}(x_m),\end{equation}
  where  $|\bm{i}|= i_1+\cdots+ i_p$, $\mathcal{Z}=\{\bm{i}=(i_1,\dots,i_p): i_1,\dots, i_p\in\mathbb{N}_0, |\bm{i}|>0\}$, 
 {  
  \begin{align}
  \alpha_{i_1,\dots,i_p; j}& :=\mathbb{P}
  \left(\left\{J(\chi^{(n)}_{\ell})\right\}_{\ell= 0}^{|\bm{i}|-1}\mbox{ makes $i_a$ visits to $a$ for all $a\in\mathcal{E}$}, J(\chi^{(n)}_{|\bm{i}|-1})=j\mid \mathcal{N}^{(n)}\right),\\
  \beta_{\ell; j}& :=\mathbb{P}(J(\chi^{(n)}_{\ell})=\star \mid  J(\chi^{(n)}_{\ell-1})=j,\mathcal{N}^{(n)} )=1-\sum_{a\in\mathcal{E}} [\bm{Q}_\ell^{(n)}]_{j,a},\label{eq:approxKulkarnibeta1}
  \end{align}}
and $f_{k, i_1,\dots,i_p}$ denotes the hypoexponential distribution associated to $\sum_{j\in \mathcal{E}} \mathrm{Erl}(i_j, n/r(j,k))$, with each $\mathrm{Erl}(i_j, n/r(j,k))$ denoting an (independent) Erlang random variable of shape $i_j$ and rate $n/r(j,k)$. {   Here $\mathrm{Erl}(i, \infty)$ denotes the $0$-valued random variable for all $i\ge 1$}. Moreover, the constants $\alpha_{i_1,\dots,i_p; j}$ can be recursively computed by
  \begin{align}
  \alpha_{\bm{e}_h^\intercal; j}=\left\{\begin{array}{ccc}\pi_j&\mbox{if}& h=j\\ 0 &\mbox{if}& h\neq j\end{array}\right.\label{eq:approxKulkarnialpha0}
  \end{align}
  and for $|\bm{i}| \ge 2$,
  \begin{align}
 \alpha_{i_1,\dots,i_p; j}=\left\{\begin{array}{ccc}\sum_{h\in \mathcal{E}}\alpha_{i_1,\dots,i_j-1,
 \dots,i_p; h}[\bm{Q}_{|\bm{i}|-1}^{(n)}]_{hj}&\mbox{if}& i_j>0\\ 0 &\mbox{if}& i_j= 0.\end{array}\right.\label{eq:approxKulkarnialpharec1}
  \end{align} \end{theorem}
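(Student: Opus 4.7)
The plan is to exploit the conditional independence structure baked into the construction of Section~\ref{sec:strongMJP}. Condition on the enlarged sigma-algebra generated by $\mathcal{N}^{(n)}$ together with the embedded state sequence $\{J(\chi_\ell^{(n)})\}_{\ell \ge 0}$. Then the processes $\mathcal{J}_1^{(n)},\dots,\mathcal{J}_m^{(n)}$ all visit the \emph{same} deterministic sequence of states, but their sojourn times are driven by the mutually independent Poisson grids $\mathcal{M}_1^{(n)},\dots,\mathcal{M}_m^{(n)}$; hence they are conditionally independent. Decomposing the reward integral state by state gives
\[Y_k^{(n)} = \sum_{a \in \mathcal{E}} r(a,k)\, T_a^{(k)},\]
where $T_a^{(k)}$ is the total time $\mathcal{J}_k^{(n)}$ spends in state $a$. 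Since the inter-arrival times of $\mathcal{M}_k^{(n)}$ are i.i.d.\ $\mathrm{Exp}(n)$ and disjoint in time for different $a$, if the embedded chain makes exactly $i_a$ visits to $a$ before termination, then $T_a^{(k)} \sim \mathrm{Erl}(i_a, n)$ and $r(a,k)T_a^{(k)} \sim \mathrm{Erl}(i_a, n/r(a,k))$ independently across $a$.

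Consequently, on the event that the visit pattern is $\bm{i}=(i_1,\dots,i_p)\in\mathcal{Z}$, the conditional law of $Y_k^{(n)}$ is precisely the hypoexponential law $f_{k;i_1,\dots,i_p}$; and by the across-$k$ independence of the grids, the conditional joint density of $(Y_1^{(n)},\dots,Y_m^{(n)})$ factors as
\[f_{1;i_1,\dots,i_p}(x_1)\,f_{2;i_1,\dots,i_p}(x_2)\cdots f_{m;i_1,\dots,i_p}(x_m).\]
The convention $\mathrm{Erl}(i,\infty)\equiv 0$ absorbs the cases with $r(a,k)=0$, collapsing the $a$-th component out of the hypoexponential convolution. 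Applying the law of total probability, conditioning jointly on the pattern $\bm{i}$ and the last visited state $j\in\mathcal{E}$ before absorption, yields the mixture representation in~(\ref{eq:approxKulkarni1}) with weight $\alpha_{i_1,\dots,i_p;j}\,\beta_{|\bm{i}|,j}$.

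It then remains to justify the formulas for the weights. The identity for $\beta_{\ell;j}$ is immediate from part~(a) of Theorem~\ref{th:approximation1}: from state $j$ at step $\ell$, the conditional probability of transitioning into $\mathcal{E}$ is $\sum_{a\in\mathcal{E}}[\bm{Q}_\ell^{(n)}]_{j,a}$, and the deficit from $1$ is exactly the absorption probability into $\star$. The recursion~(\ref{eq:approxKulkarnialpharec1}) follows from one-step conditioning using the Markov property of the embedded chain: a path realizing visit pattern $\bm{i}$ with last state $j$ (and $|\bm{i}|\ge 2$) must have first realized some pattern $(i_1,\dots,i_j-1,\dots,i_p)$ ending at some $h\in\mathcal{E}$, and then transitioned to $j$ with probability $[\bm{Q}_{|\bm{i}|-1}^{(n)}]_{hj}$; the case $i_j=0$ is vacuous since the pattern and the last state are inconsistent. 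The base case~(\ref{eq:approxKulkarnialpha0}) encodes the initial distribution $J(\chi_0^{(n)})\sim\bm{\pi}$.

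The main bookkeeping obstacle is ensuring the mixture truly integrates to $1$ (equivalently, that $(Y_1^{(n)},\dots,Y_m^{(n)})$ is a.s.\ finite), which reduces to showing $\sum_{\bm{i}\in\mathcal{Z}}\sum_{j\in\mathcal{E}}\alpha_{\bm{i};j}\beta_{|\bm{i}|,j}=1$; this follows because the embedded chain is a.s.\ absorbed into $\star$ in finitely many steps by the transience assumption on $\mathcal{J}$ (and, in the general case, because the event $\{\tau<\infty\}$ has full measure on paths that eventually enter $\star$). The remaining minor technicalities, namely measurable selection of the pattern statistic and the degenerate handling of $r(a,k)=0$ entries, can be dispatched by standard arguments.
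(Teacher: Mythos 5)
Your proposal is correct and follows essentially the same route as the paper's proof: condition on the embedded chain, observe that the sojourn times on the independent grids $\mathcal{M}_k^{(n)}$ turn each $Y_k^{(n)}$ into a hypoexponential built from per-state Erlang blocks, use the conditional independence across $k$ to factor the joint density, and then apply the law of total probability over visit patterns and last-visited state, with the recursion for $\alpha_{\bm{i};j}$ obtained by one-step backward conditioning. The only cosmetic difference is that you group the exponential sojourns by state from the outset (the paper first writes $Y_k^{(n)}$ as a convolution of $L^{(n)}$ exponentials and then counts visits), and you add an explicit normalization check that the paper leaves implicit; neither changes the substance of the argument.
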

\begin{proof}\textbf{Proof.}
Let us use the convention that $
\mathcal{J}$ and $\mathcal{J}^{(n)}$ enter $\star$ once termination occurs, and let $L^{(n)}:=\inf\{\ell\ge 1: J(\chi^{(n)}_\ell)=\star\}$. Then 
\[Y_k^{(n)}=\sum_{\ell=0}^{L^{(n)}-1} {  {\red r\left(J^{(n)}_k(\theta_{k,\ell}^{(n)}),k\right) }}\left( \theta_{k,\ell+1}^{(n)}-\theta_{k,\ell}^{(n)}\right)=\sum_{{  \ell=0}}^{L^{(n)}-1} {  r\left(J(\chi^{(n)}_\ell), k\right)}\left( \theta_{k,\ell+1}^{(n)}-\theta_{k,\ell}^{(n)}\right).\]
Since $\{J(\chi^{(n)}_\ell)\}_{\ell\ge 0}$ is independent of $\{\theta_{k,\ell}^{(n)}\}_{\ell\ge 0}$, then (conditional on $\{J(\chi^{(n)}_\ell)\}_{\ell\ge 0}$) $Y_k^{(n)}$ is a convolution of $L^{(n)}$ exponential random variables of parameters $n/r(J(\chi^{(n)}_0),k)$, $n/r(J(\chi^{(n)}_1),k)$, $\dots$, $n/r(J(\chi^{(n)}_{L^{(n)}-1}),k)$, respectively. By counting the amount of times each state is visited, then it follows that (conditional on $\{J(\chi^{(n)}_\ell)\}_{\ell\ge 0}$ and $\mathcal{N}^{(n)}$) $Y_k^{(n)}$  follows an hypoexponential distribution resulting from convoluting $L^{(n)}_1$-$\mathrm{Exp}(n/r(1,k))$ rv's, $L^{(n)}_2$-$\mathrm{Exp}(n/r(2,k))$ rv's, $\dots$ , and $L^{(n)}_p$-$\mathrm{Exp}(n/r(p,k))$ rv's, where
\[L^{(n)}_i=\#\{\ell\ge 0: J(\chi^{(n)}_\ell)=i\},\quad i\in\mathcal{E}.\]
Thus, employing the law of total probability and the definition of the univariate densities $f_{k; i_1,\dots, i_p}$, the conditional multivariate density function associated to $(Y_1^{(n)},\dots, Y_m^{(n)})$ is of the form
\begin{align*}
f(x_1,x_2,\dots, x_m) = \sum_{\substack{(i_1,\dots, i_p)\in\mathcal{Z}}}\rho_{i_1,\dots,i_p} f_{1;i_1,\dots,i_p}(x_1)f_{2;i_1,\dots,i_p}(x_2)
  \cdots f_{m;i_1,\dots,i_p}(x_m),
\end{align*}
where 
\begin{align*}\rho_{i_1,\dots,i_p}& =\mathbb{P}\left(L^{(n)}_1=i_1,\dots, L^{(n)}_p=i_p, J(\chi^{(n)}_{|\bm{i}|})=\star\mid \mathcal{N}^{(n)}\right)\\
& = \mathbb{P}
  \left(\left\{J(\chi^{(n)}_{\ell})\right\}_{\ell= 0}^{|\bm{i}|-1}\mbox{ makes $i_a$ visits to $a$ for all $a\in\mathcal{E}$}, J(\chi^{(n)}_{|\bm{i}|})=\star\mid \mathcal{N}^{(n)}\right)\\
  & = \sum_{j\in \mathcal{E}} \mathbb{P}
  \left(\left\{J(\chi^{(n)}_{\ell})\right\}_{\ell= 0}^{|\bm{i}|-1}\mbox{ makes $i_a$ visits to $a$ for all $a\in\mathcal{E}$}, J(\chi^{(n)}_{|\bm{i}|-1})=j\mid \mathcal{N}^{(n)}\right)\\
  &\quad\quad\quad\times \mathbb{P}(J(\chi^{(n)}_{|\bm{i}|})=\star \mid  J(\chi^{(n)}_{|\bm{i}|-1})=j, \mathcal{N}^{(n)} )\\
  & = \sum_{j\in\mathcal{E}} \alpha_{i_1,\dots,i_p; j} \,\beta_{|\bm{i}|, j},\end{align*}
  from which (\ref{eq:approxKulkarni1}) follows. The r.h.s. of (\ref{eq:approxKulkarnibeta1}) and (\ref{eq:approxKulkarnialpha0}) follow by definition of $\bm{Q}^{(n)}_\ell$ and $\bm{\pi}$, respectively. Meanwhile, the recursion (\ref{eq:approxKulkarnialpharec1}) follows by noting that for $i_j>0$
  \begin{align}
 &\mathbb{P}
  \left(\left\{J(\chi^{(n)}_{\ell})\right\}_{\ell= 0}^{|\bm{i}|-1}\mbox{ makes $i_a$ visits to $a$ for all $a\in\mathcal{E}$}, J(\chi^{(n)}_{|\bm{i}|-1})=j\mid \mathcal{N}^{(n)}\right)\label{eq:approxaux7}\\
  &\quad = \sum_{h\in\mathcal{E}}\mathbb{P}
  \left(\left\{J(\chi^{(n)}_{\ell})\right\}_{\ell= 0}^{|\bm{i}|-2}\mbox{ makes $i_a-\delta_{aj}$ visits to $a$ for all $a\in\mathcal{E}$}, J(\chi^{(n)}_{|\bm{i}|-2})=h\mid \mathcal{N}^{(n)}\right)\nonumber\\
  &\quad\quad\quad\quad\times\mathbb{P}
  \left(J(\chi^{(n)}_{|\bm{i}|-1})=j \mid J(\chi^{(n)}_{|\bm{i}|-2})=h, \mathcal{N}^{(n)}\right)\nonumber\\
  &\quad = \sum_{h\in \mathcal{E}}\alpha_{i_1,\dots,i_j-1,
 \dots,i_p; h}[\bm{Q}_{|\bm{i}|-1}^{(n)}]_{hj};\nonumber
  \end{align}
given that the event $\{J(\chi^{(n)}_{|\bm{i}|-1})=j\}$ implies that there is at least one visit to $j$ on or before the $|\bm{i}|-1$ step, then the  {  expression} in the l.h.s. of (\ref{eq:approxaux7}) is null if $i_j=0$.
$\hfill\square$\end{proof}

\begin{example}\rm
Consider the Loss-ALAE dataset, comprising $n=1500$ bivariate observations. The first margin is an insurance loss, and the second one is the corresponding allocated loss adjustment
expense (ALAE). The dataset was studied in \cite{frees1998understanding}.

There are $34$ loss observations {  that are} right-censored, but to avoid deriving a new model for right-censored observations (and since the percentage of censoring is very small), we consider them as fully observed. The loss variable ranges from $10$ to $2.2$MM, with quartiles of
$4,000$, $12,000$ and $35,000$, while the ALAE variable ranges from $15$ to $0.5$MM with quartiles of $2,300$, $5,500$ and $12,600$. In a similar fashion as the analysis of \cite{joe2014dependence}, we divide all data points by a constant, in our case $10^6$, for easier numerical implementation.

The initial step consists of estimating a {  (homogeneous)} MPH$^\ast$ representation to the data. This can be done using the Expectation-Maximization algorithm implemented in the \texttt{matrixdist} package (cf. \cite{matrixdist}). The resulting estimated parameters are given by
\begin{align}\label{mph_star_params}
	\vect{\alpha}=(0.22,\, 0.73,\, 0.05),\quad \mat{S}= \left( \begin{array}{ccc}
		-11 & 0.99 & 0.09 \\
		0.11 & -46.47&0.16\\
		0.14 & 0.15&-3.21
	\end{array} \right),\quad\mat{R}=\left( \begin{array}{cc}
		0.95 & 0.05  \\
		0.56 & 0.44\\
		0.75 & 0.25
	\end{array} \right).
\end{align}

In a second step, we find the $n$ and truncation limit such that the approximation of the Markov jump process by uniformization is visually adequate. These turn out to be $n=45$ and upper truncation limit of $40$. Finally, we may visualize the MPH$^\ast$ density (which does not exist in closed-form) using the uniformization approximation, which is shown in Figure \ref{plots:kulkarni_approx}. {  Apart from likelihood considerations, the shape of the density and its visually multimodal structure can help the modeller more easily accept or challenge the fit of the model.} This insight is only possible due to the approximation, and could not be read off directly from the parameters \eqref{mph_star_params}.

\begin{figure}[!htbp]\centering
\includegraphics[width=0.49\textwidth]{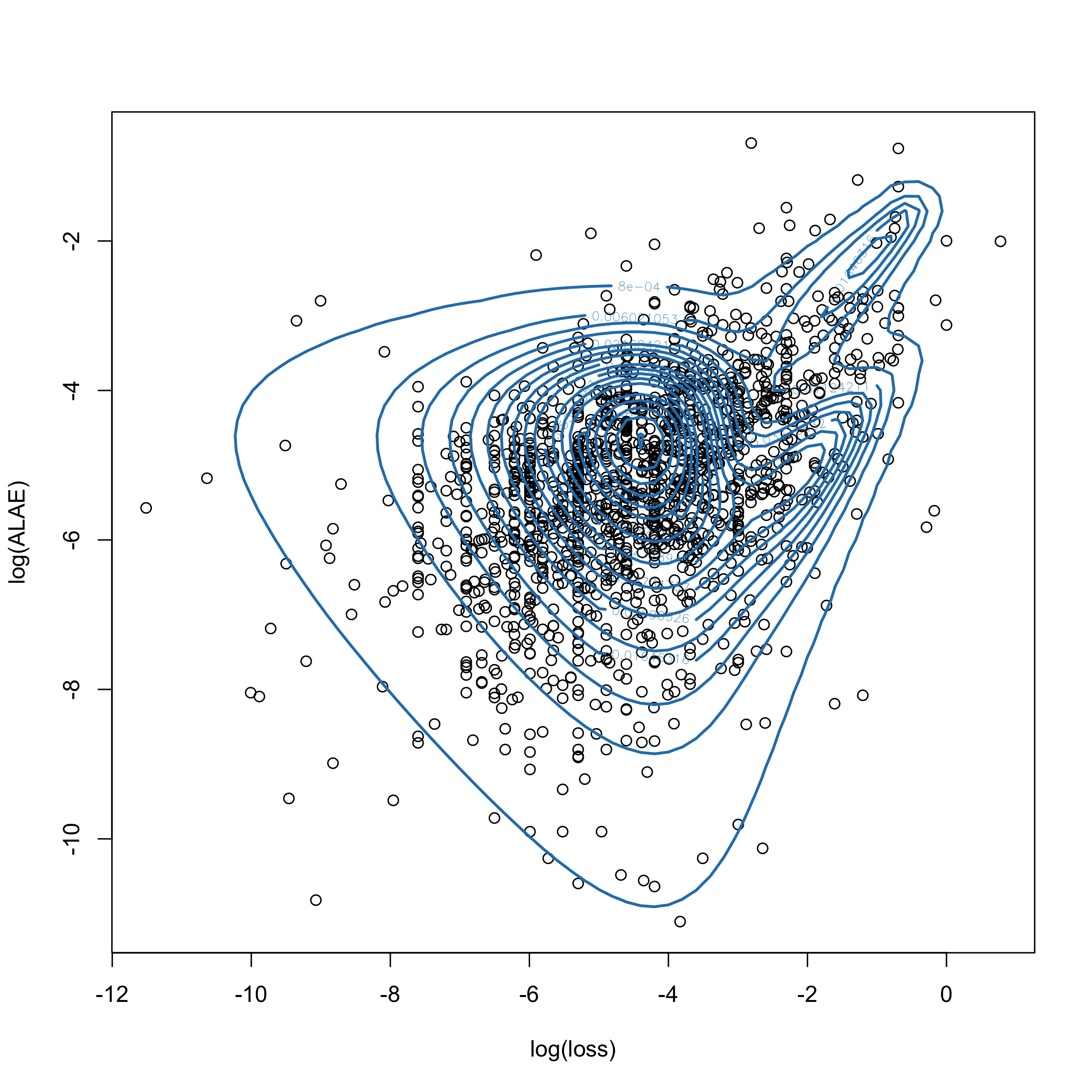}
\includegraphics[width=0.49\textwidth]{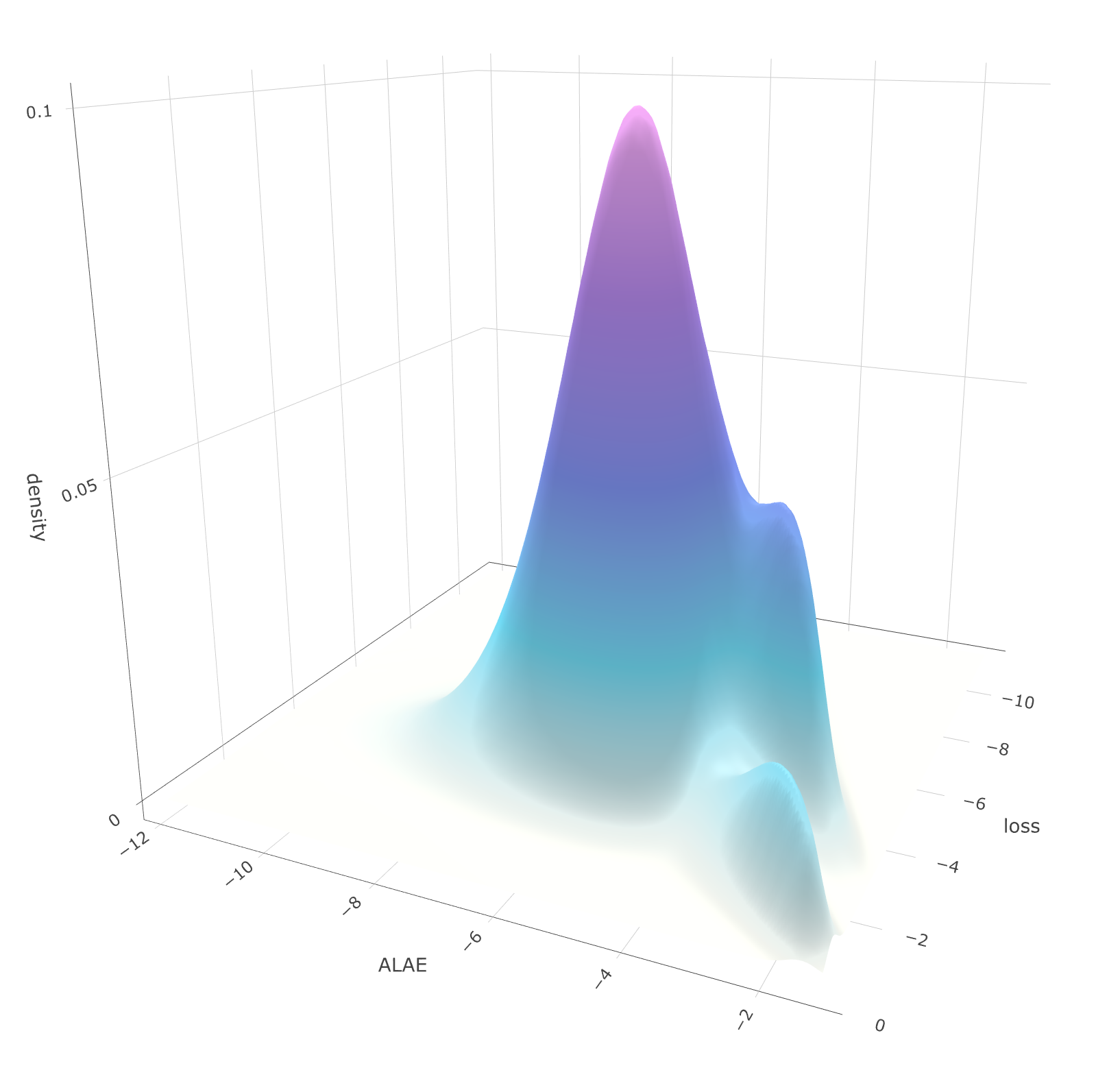}
\caption{Multivariate approximation of the $\mbox{MPH}^*$ fit to the Loss-ALAE dataset. The plot axes are log-transformed for visualization purposes.}
\label{plots:kulkarni_approx}
\end{figure}
\end{example}
  
\section{Extensions}\label{sec:extensions}

In general, most arguments presented in this article can be adapted to any jump process on a finite state-space whose jump structure are dependent on a bounded jump intensity matrix function. For instance, one can build strong approximations to the completely non-homogeneous semi-Markov process considered in \cite[Chapter 6]{janssen2006applied}, a model which encompasses characteristics of both time-inhomogeneous and semi-Markovian models. However, due to the complexity of the said model, the approximations will not be nearly as tractable as the ones presented here for the {\red time-inhomogeneous case}.

\section*{Acknowledgments.}
MB and OP would like to acknowledge financial support from the Swiss National Science Foundation Project 200021\_191984.



\bibliographystyle{informs2014} 
\bibliography{strongPH.bib} 


\end{document}